\title[Pentagon Minimization without Computation]{Pentagon Minimization without Computation}
\author{John Mackey \and Bernardo Subercaseaux}
\address{Carnegie Mellon University, Pittsburgh, PA 15213}
\email{\url{jmackey@andrew.cmu.edu}, \url{bersub@cmu.edu}}
\newtheorem{theorem}{Theorem}
\newtheorem{lemma}[theorem]{Lemma}
\newcommand{\bgamma}{\bar{\gamma}}
\newcommand{\bbeta}{\bar{\beta}}
\DeclareMathOperator{\Ex}{\mathbf{E}}
\DeclareMathOperator{\Var}{\mathbf{Var}}
\DeclareMathOperator{\Cov}{\mathbf{Cov}}
\newcommand{\myCustomUSymbol}{%
\tikz[baseline=-0.75ex]{
    \coordinate (a) at (-0.5ex,-0.75ex);
    \coordinate (b) at (0.1ex, 1ex);
    \coordinate (c) at (1.8ex, 1ex);
    \coordinate (d) at (2.4ex, -0.75ex);
    \coordinate (e) at (0.95ex, -0.1ex);

    \draw[, -] (a) -- (b) -- (c) -- (d) -- cycle; 
    \draw[, -] (a) -- (c);
    \draw[, -] (b) -- (d); 

    \draw[, -] (a) -- (e) -- (d);
    \draw[, -] (b) -- (e) -- (c);

    \node[draw, circle, fill, blue, inner sep=0.1ex] (na) at (a) {};
    \node[draw, circle, fill, blue, inner sep=0.1ex] (nb) at (b) {};
    \node[draw, circle, fill, blue, inner sep=0.1ex] (nc) at (c) {};
    \node[draw, circle, fill, blue, inner sep=0.1ex] (nd) at (d) {};
    \node[draw, circle, fill, blue, inner sep=0.1ex] (ne) at (e) {};
}}
\newsavebox{\myUSymbolBox}
\savebox{\myUSymbolBox}{\myCustomUSymbol}
\newcommand{\squareOne}{\ensuremath{\usebox{\myUSymbolBox}}}
\newcommand{\myPentaSymbol}{%
\tikz[baseline=-0.75ex]{
    \coordinate (a) at (-0.6ex,-0.8ex);
    \coordinate (b) at (-0.9ex, 0.4ex); 
    \coordinate (c) at (0.35ex, 1.25ex);
    \coordinate (d) at (1.1ex, -0.8ex);
    \coordinate (e) at (1.5ex, 0.4ex);

    \draw[, -] (a) -- (b) -- (c) -- (d) -- cycle; 
    \draw[, -] (a) -- (c);
    \draw[, -] (b) -- (d); 

    \draw[, -] (a) -- (e) -- (d);
    \draw[, -] (b) -- (e) -- (c);

    \node[draw, circle, fill, blue, inner sep=0.1ex] (na) at (a) {};
    \node[draw, circle, fill, blue, inner sep=0.1ex] (nb) at (b) {};
    \node[draw, circle, fill, blue, inner sep=0.1ex] (nc) at (c) {};
    \node[draw, circle, fill, blue, inner sep=0.1ex] (nd) at (d) {};
    \node[draw, circle, fill, blue, inner sep=0.1ex] (ne) at (e) {};
}}
\newsavebox{\myPentaBox}
\savebox{\myPentaBox}{\myPentaSymbol}
\newcommand{\pentagon}{\ensuremath{\usebox{\myPentaBox}}}
\newcommand{\squareZeroSymbol}{%
\tikz[baseline=-0.75ex]{
    \coordinate (a) at (0.1ex,-0.75ex);
    \coordinate (b) at (1.85ex, -0.75ex);
    \coordinate (c) at (1.85ex, 1ex);
    \coordinate (d) at (0.1ex, 1ex);

    \draw[, -] (a) -- (b) -- (c) -- (d) -- cycle; 

    \node[draw, circle, fill, blue, inner sep=0.1ex] (na) at (a) {};
    \node[draw, circle, fill, blue, inner sep=0.1ex] (nb) at (b) {};
    \node[draw, circle, fill, blue, inner sep=0.1ex] (nc) at (c) {};
    \node[draw, circle, fill, blue, inner sep=0.1ex] (nd) at (d) {};
}}
\newsavebox{\squareZeroBox}
\savebox{\squareZeroBox}{\squareZeroSymbol}
\newcommand{\squareZero}{\ensuremath{\usebox{\squareZeroBox}}}
\newcommand{\triangleZeroSymbol}{%
\tikz[baseline=-0.75ex]{
    \coordinate (a) at (0.1ex,-0.75ex);

    \coordinate (b) at (1.8ex, -0.75ex);
    \coordinate (c) at (0.95ex, 0.6ex);


    \draw[, -] (a) -- (b) -- (c)  -- cycle; 
 

    \node[draw, circle, fill, blue, inner sep=0.1ex] (na) at (a) {};
    \node[draw, circle, fill, blue, inner sep=0.1ex] (nb) at (b) {};
    \node[draw, circle, fill, blue, inner sep=0.1ex] (nc) at (c) {};
}}
\newcommand{\triangleTwoSymbol}{%
\tikz[baseline=-0.75ex]{
    \coordinate (a) at (-0.5ex,-0.75ex);

    \coordinate (b) at (2.4ex, -0.75ex);
    \coordinate (c) at (0.95ex, 1.2ex);

    \coordinate (d) at (0.5ex, 0.0ex);
    \coordinate (e) at (1.4ex, 0.0ex);

    \draw[, -] (a) -- (b) -- (c)  -- cycle; 
    \draw[, -] (d) -- (a);
    \draw[, -] (e) -- (a);
    \draw[, -] (d) -- (b);
    \draw[, -] (e) -- (b);
    \draw[, -] (d) -- (c);
    \draw[, -] (e) -- (c);
    \draw[, -] (d) -- (e);
 

    \node[draw, circle, fill, blue, inner sep=0.1ex] (na) at (a) {};
    \node[draw, circle, fill, blue, inner sep=0.1ex] (nb) at (b) {};
    \node[draw, circle, fill, blue, inner sep=0.1ex] (nc) at (c) {};
    \node[draw, circle, fill, blue, inner sep=0.1ex] (nd) at (d) {};
    \node[draw, circle, fill, blue, inner sep=0.1ex] (ne) at (e) {};
}}
\newcommand{\triangleOneSymbol}{%
\tikz[baseline=-0.75ex]{
    \coordinate (a) at (-0.5ex,-0.75ex);

    \coordinate (b) at (2.4ex, -0.75ex);
    \coordinate (c) at (0.95ex, 1.2ex);

    \coordinate (d) at (0.9ex, 0.0ex);

    \draw[, -] (a) -- (b) -- (c)  -- cycle; 
    \draw[, -] (d) -- (a);
    \draw[, -] (d) -- (b);
    \draw[, -] (d) -- (c);
 

    \node[draw, circle, fill, blue, inner sep=0.1ex] (na) at (a) {};
    \node[draw, circle, fill, blue, inner sep=0.1ex] (nb) at (b) {};
    \node[draw, circle, fill, blue, inner sep=0.1ex] (nc) at (c) {};
    \node[draw, circle, fill, blue, inner sep=0.1ex] (nd) at (d) {};
}}
\newsavebox{\triangleTwoBox}
\savebox{\triangleTwoBox}{\triangleTwoSymbol}
\newsavebox{\triangleOneBox}
\savebox{\triangleOneBox}{\triangleOneSymbol}
\newsavebox{\triangleZeroBox}
\savebox{\triangleZeroBox}{\triangleZeroSymbol}
\newcommand{\triTwo}{\ensuremath{\usebox{\triangleTwoBox}}}
\newcommand{\triOne}{\ensuremath{\usebox{\triangleOneBox}}}
\newcommand{\triZero}{\ensuremath{\usebox{\triangleZeroBox}}}
\newlength{\bibitemsep}\setlength{\bibitemsep}{.2\baselineskip plus .05\baselineskip minus .05\baselineskip}
\newlength{\bibparskip}\setlength{\bibparskip}{0pt}
\let\oldthebibliography\thebibliography
\renewcommand\thebibliography[1]{%
  \oldthebibliography{#1}%
  \setlength{\parskip}{\bibitemsep}%
  \setlength{\itemsep}{\bibparskip}%
}
\begin{document}
\begin{abstract}
\citet{erdosCrossingNumberProblems1973}~initiated a line of research studying $\mu_k(n)$, the minimum number of convex $k$-gons one can obtain by placing $n$ points in the plane without any three of them being collinear. Asymptotically, the limits $c_k := \lim_{n\to \infty} \mu_k(n)/\binom{n}{k}$ exist for all $k$, and are strictly positive due to the Erd\H{o}s-Szekeres theorem.
This article focuses on the case $k=5$, where $c_5$ was known to be between $0.0608516$ and $0.0625$~\citep{goaoc2018limitsordertypes, subercaseaux2023minimizing}. The lower bound was obtained through the Flag Algebra method of~\citet{razborovFlagAlgebras2007}, using semi-definite programming. 
In this article we prove a more modest lower bound of $\frac{5\sqrt{5}-11}{4} \approx 0.04508$ without any computation; we exploit \emph{``planar-point equations''}  that count, in different ways, the number of convex pentagons (or other geometric objects) in a point placement.  To derive our lower bound we combine such equations by viewing them from a statistical perspective, which we believe can be fruitful for other related problems.
\end{abstract}

\maketitle

\section{Introduction}
\label{sec:intro}

\setlength{\epigraphwidth}{2.6in}
\epigraph{
    \emph{The infinite we do right away,\\ the finite might take some time.}
}{Paul Erd\H{o}s and Stanis\l{}aw Ulam,~1978.}
\vspace{6pt}

The interplay between Ramsey theory and discrete geometry was kick-started by Esther Klein, who in 1933 proved that every set of $5$ points in \emph{general position} (i.e., without any subset of three collinear points) must contain a convex quadrilateral~\citep{grahamRamseyTheory1990,Erdos1935}.
This result initiated a line of research studying $g(k)$\footnote{It is also common to use the notation $\textrm{ES}(k)$.}, the minimum number of points in general position required to guarantee the presence of a convex $k$-gon.
Let us briefly summarize what is known about $g(k)$. In their seminal paper,~\citet{Erdos1935} showed that $g(k)$ is finite for every $k$, a result often referred to as the \emph{Erd\H{o}s-Szekeres theorem}.
The first few values of the $g(k)$ function are not hard to obtain: $g(3) = 3, g(4) = 5$, and $g(5) = 9$. Based on these values, \citet{Erdos1935} conjectured that 
\[
  g(k) = 2^{k-2} + 1, \text{ for all } k \geq 3.  
\]
Then,~\citet{Erdos1961} showed through an explicit construction that the conjectured value $2^{k-2} + 1$ is a valid lower bound for $g(k)$.
On the other hand, the best upper bound at the moment is due to~\citet{holmsenTwoExtensionsErdos2020}, who following the breakthrough of~\citet{suk}, proved that 
\[
    g(k) \leq 2^{k + O(\sqrt{k \log k})}. 
\]

With respect to the values of $g(k)$ for small $k$, \citet{szekeres_peters_2006} proved that $g(6) = 17$ through computational means, and to this day, the value of $g(k)$ is not known for any $k \geq 7$.\\

A \emph{quantitative} spin-off from the $g(k)$ line of research was started by \citet{erdosCrossingNumberProblems1973}, who stated:
\begin{center} \emph{``More generally, one can ask for the least number of convex $k$-gons determined by $n$ points in the plane.''} \end{center}
Let us use 
 $\mu_k(n)$ to denote the minimum number of convex $k$-gons one can obtain by a general position placement of $n$ points in the plane.
The study of $\mu_4(n)$ has received particular attention among discrete mathematicians, since it is equivalent to $\overline{\textrm{cr}}(K_n)$, the rectilinear crossing number of the complete graph~\citep{erdosCrossingNumberProblems1973,AFMS}.
Asymptotically, the study of $\mu_k(n)$ corresponds to studying the limits 
\[ 
    c_k := \lim_{n\to \infty} \mu_k(n)/\binom{n}{k}.
\]
To see that these limits exist, consider the following folklore \emph{supersaturation} result:

\begin{lemma}[As stated by~\citet{subercaseaux2023minimizing}]\label{lemma:folklore}
	Let $m$ and $r$ be values such that $\mu_k(m) \geq r$. Then, for every $n \geq m$, we have \[\mu_k(n) \geq r \cdot \binom{n}{k}/\binom{m}{k}.\]
\end{lemma}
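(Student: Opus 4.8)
The plan is to run the standard double-counting (averaging) argument over all $m$-point sub-configurations of an optimal $n$-point configuration. First I would fix a general-position placement $P$ of $n$ points that attains the minimum, so that $P$ determines exactly $\mu_k(n)$ convex $k$-gons. The crucial (and elementary) observation is that both ``being in general position'' and ``forming the vertex set of a convex $k$-gon'' are properties of a $k$-subset of points in isolation: they cannot be created or destroyed by adding or removing other points. Hence every $m$-subset $Q \subseteq P$ is itself a general-position placement of $m$ points, so by hypothesis it determines at least $\mu_k(m) \ge r$ convex $k$-gons, and each of these is also a convex $k$-gon of $P$.

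Next I would count, in two ways, the number of incident pairs $(K,Q)$ where $K$ is a convex $k$-gon of $P$ and $Q$ is an $m$-subset of $P$ whose point set contains all $k$ vertices of $K$. Grouping by $Q$: there are $\binom{n}{m}$ choices of $Q$, each contributing at least $r$ pairs, so the total is at least $r\binom{n}{m}$. Grouping by $K$: there are $\mu_k(n)$ choices of $K$, and for each the $m-k$ remaining points of $Q$ can be any $(m-k)$-subset of the other $n-k$ points of $P$, so the total is exactly $\mu_k(n)\binom{n-k}{m-k}$. Comparing the two counts yields $\mu_k(n)\binom{n-k}{m-k} \ge r\binom{n}{m}$.

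Finally I would rearrange using the identity $\binom{n}{m}\binom{m}{k} = \binom{n}{k}\binom{n-k}{m-k}$ to obtain $\mu_k(n) \ge r\binom{n}{m}\big/\binom{n-k}{m-k} = r\binom{n}{k}\big/\binom{m}{k}$, as claimed. I do not expect a genuine obstacle in this argument; the only points requiring (minimal) care are justifying the ``locality'' of the convexity and general-position conditions so that passing to subsets is harmless, and keeping the binomial bookkeeping indexed correctly.
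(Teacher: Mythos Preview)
Your argument is correct and is exactly the standard double-counting proof of this folklore supersaturation lemma. Note, however, that the paper does not actually prove \Cref{lemma:folklore}: it merely states it (attributing the formulation to \citet{subercaseaux2023minimizing}) and immediately uses it, so there is no ``paper's own proof'' to compare against. Your write-up would serve perfectly well as the omitted proof; the only cosmetic remark is that the locality justification you flag is indeed trivial here and could be compressed to a single clause.
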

Using $m=n-1$ and $r= \mu_k(n-1)$, we obtain
\(
  \mu_k(n)/\binom{n}{k} \geq \mu_k(n-1)/\binom{n-1}{k}
\), 
and thus the sequence $\mu_k(n)/\binom{n}{k}$ is non-decreasing. Then, by noting the sequence $\mu_k(n)/\binom{n}{k}$ is also bounded above by $1$ we confirm the existence of the limit $c_k$.

Furthermore, from the Erd\H{o}s-Szekeres theorem, we know that $g(k)$ is finite and thus combining $\mu_k(g(k)) = 1$ with~\Cref{lemma:folklore} we obtain
\[
  c_k \geq 1/\binom{g(k)}{k} > 0.
\]
We can get a  concrete bound for $c_k$ by plugging in the upper bounds of either~\citet{holmsenTwoExtensionsErdos2020}~or~\citet{suk}, obtaining:  
\[
  c_k \geq \left(\frac{k}{2^{k+o(k)}}\right)^k.
\]

These bounds are particularly loose for small values of $k$.
For $k=4$, after a series of successive improvements, the best bounds known for $c_4$\footnote{The particular case of $c_4$ is often denoted by $q^\star$, as it is equivalent to Sylvester's \emph{four point constant}~\citep{Scheinerman_Wilf_1994}.} are due to~\citet{ABREGO2008273}~and~\citet{Ongoing}:
\[
    0.379972 \leq c_4 \leq 0.3804491869.
\]

\subsection{Previous Work}

We focus on the case $k=5$, where the best bounds known are due to~\citet{goaoc2018limitsordertypes} and~\citet{subercaseaux2023minimizing}.
\[
  0.0608516 \leq c_5 \leq 0.0625 = \frac{1}{16}.    
\]
The upper bound of~\citet{subercaseaux2023minimizing} was obtained constructively, and the authors conjectured the true value of $c_5$ to be $\frac{1}{16}$, which was also conjectured by Abrego according to~\citet{goaoc2018limitsordertypes}.
 So far, lower bounds have been obtained through two different computational methods. On one hand, by computing $\mu_5(16) = 112$ using SAT solvers, \citet{subercaseaux2023minimizing} proved that $c_5 \geq \frac{112}{4368} \approx 0.02564$, and such a result was accompanied of a MaxSAT proof that can be verified independently.
  In the same line,~\citet{Battleman} obtained $\mu_5(18) = 252$, which implies $c_5 \geq 0.029411$. 
  On the other hand, the best lower bound, due to~\citet{goaoc2018limitsordertypes}, was obtained through the Flag Algebra method of~\citet{razborovFlagAlgebras2007}, using semi-definite programming. As noted by~\citet{RasborovWhatIs}, a downside of this method is the lack of a standard proof system for semi-definite programming, which can render verification difficult. Nonetheless, it is worth noticing that~\citet{goaoc2018limitsordertypes} included scripts for verification of their results, which can be therefore reproduced in about an hour of computation.
  In terms of simplicity, despite a great exposition by~\citet{goaoc2018limitsordertypes} that contains many nice examples, the formalism of the Flag Algebra method has several prerequisites, like measure theory and finite model theory. Furthermore, the inequalities used to provide the lower bound are encapsulated in the semi-definite programming formulation, and thus it is not straightforward to deduce what is going on mathematically under the hood. This might also be considered a benefit of the method; Flag Algebras allows automating calculations that in some sense we ``do by hand''. As we argue in the next subsection, however, we believe there is value in this more manual approach.
  
\subsection{Our Contribution}

In this article, we provide a more modest lower bound by more elementary means: we use a combinatorial argument based on~\emph{``planar-point equations''}, similar to~\citep[Equation 7.]{goaoc2018limitsordertypes}, and a statistical perspective on equations that count geometric objects (cf.~\citep[Section 2.1.1]{goaoc2018limitsordertypes}). Our main result is thus:
\begin{theorem}\label{thm:main}
  For every $n > C$, where $C$ is an absolute constant, the minimum number of convex $5$-gons among $n$ points in the plane without three of them in a line, denoted by $\mu_5(n)$, satisfies
  \[
    \mu_5(n) \geq \frac{5\sqrt{5}-11}{480}n^5  \approx \frac{1}{2661}n^5,  
  \]
  and consequently,
    \[
        c_5 := \lim_{n\to \infty} \frac{\mu_5(n)}{\binom{n}{5}}\geq \frac{5\sqrt{5}-11}{4} \approx 0.04508.
    \]
\end{theorem}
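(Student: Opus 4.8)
The plan is to set up a family of ``planar-point equations'' that count, in two different ways, weighted sums of small configurations among the $n$ points, and then to extract the bound by optimizing a linear combination of these identities. Concretely, fix a general-position set $P$ of $n$ points, and for each $5$-subset $S$ let $t(S)\in\{0,1,2\}$ be the number of points of $S$ interior to the convex hull of $S$ (so $t(S)=0$ iff $S$ is a convex pentagon, and the possible $5$-point order types are $\pentagon$, $\triOne$, $\triZero$). The global quantity we care about is $N_0 := \#\{S : t(S)=0\} = \mu_5$-type count. The first step is to derive ``conservation'' identities: summing over all $6$-subsets the number of convex pentagons they contain, versus summing over all $5$-subsets and counting in how many $6$-subsets each stays a pentagon (equivalently, for how many sixth points $p$ the configuration $S\cup\{p\}$ has a prescribed type). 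This yields linear relations among the counts of $5$- and $6$-point sub-configurations, exactly in the spirit of~\citep[Eq.~7]{goaoc2018limitsordertypes}.

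The second, and conceptually central, step is the ``statistical perspective.'' I would introduce a uniformly random $6$-subset $T$ of $P$ and consider the random variable $X$ counting convex pentagons inside $T$ (i.e.\ $X=\sum_{S\subset T,\,|S|=5}\mathbf{1}[t(S)=0]$), together with related random variables counting the other $5$-point types and perhaps the number of $4$-point types (convex quadrilaterals vs.\ triangles-with-a-point) inside $T$. Computing $\Ex[X]$ gives $N_0/\binom{n}{5}$ up to a known factor, so a lower bound on $\Ex[X]$ is what we want. The trick is that $X$ ranges over a \emph{finite} set of integer values determined by the finitely many $6$-point order types, and the Erd\H{o}s--Szekeres fact $g(5)=9$ (more usefully, structural facts about $6$ points) forces correlations: e.g.\ certain combinations of sub-type counts inside a $6$-set are impossible, which translates into a polynomial inequality $Q(X_{\text{type counts}})\ge 0$ valid pointwise. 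Taking expectations of such a pointwise inequality, using linearity and the conservation identities from Step~1 to eliminate the $6$-point variables in favor of the $5$-point (and $4$-point) densities, leaves an inequality purely in the densities $d_0,d_1,d_2$ (fractions of $5$-subsets of each type, $d_0+d_1+d_2=1$) and the $4$-point density $q$; one then minimizes $d_0$ subject to these constraints. I expect the optimum to be a small linear/quadratic program whose solution involves $\sqrt5$ — which is where the $\frac{5\sqrt5-11}{4}$ shape comes from (the $\sqrt5$ strongly suggests the binding constraint is a quadratic one, e.g.\ a variance/Cauchy--Schwarz or a ``no three collinear'' count forcing $d_1$ or $q$ to satisfy $d_1^2 \gtrsim c\,d_0$ or similar).

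So the key steps, in order: (1) write the planar-point/counting identities relating $5$- and $6$-point sub-configuration counts (and bring in the $4$-point count $q$, the Sylvester-type quantity, via a similar identity); (2) find the pointwise inequalities on $6$-point configurations — the combinatorial heart — perhaps of the form ``among any $6$ points, $(\text{\# convex pentagons}) + (\text{something}) \ge (\text{something else})$,'' possibly squared to get a quadratic; (3) take expectations, substitute the identities to reduce to the variables $d_0,d_1,d_2,q$; (4) solve the resulting optimization, obtaining $d_0 \ge \frac{5\sqrt5-11}{4}$, and translate back through $\mu_5(n) = d_0\binom{n}{5} \ge \frac{5\sqrt5-11}{480}n^5 - O(n^4)$, absorbing lower-order terms into the constant $C$.

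The main obstacle I anticipate is Step~(2): identifying the \emph{right} pointwise inequality on $6$-point (and possibly $7$-point) order types — strong enough to yield $0.045$ yet provable ``by hand'' from elementary convex-position arguments rather than an exhaustive order-type enumeration. Getting a $\sqrt5$ out cheaply means the inequality almost certainly must be nonlinear, so the delicate part is choosing which quadratic combination of configuration counts is both pointwise-valid and tight enough; once that is pinned down, Steps (1), (3), and (4) should be routine linear algebra and a one-variable optimization.
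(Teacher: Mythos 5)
There is a genuine gap: your step (2) — the pointwise inequality on $6$-point configurations that is supposed to carry all the combinatorial content — is never identified, and you yourself flag it as the open difficulty; without it the argument does not produce any bound, let alone $\frac{5\sqrt5-11}{4}$. Moreover, the route you anticipate is unlikely to deliver this constant in the advertised ``by hand'' way: averaging a fixed polynomial inequality over the finitely many $6$-point order types and eliminating via linear conservation identities yields linear/quadratic constraints with rational coefficients in the type densities $d_0,d_1,d_2,q$, and extracting an irrational optimum from that typically requires either an order-type enumeration or an SDP-style certificate — exactly the computation the paper is trying to avoid. Indeed the paper deliberately never looks at configurations of more than $5$ points.

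The actual proof gets its nonlinearity from a different place. It fixes a triangle $\triZero$ and records how many of the remaining points lie in its $\beta$- and $\gamma$-regions; exact double-counting identities (Lemmas~\ref{lemma:ppe-beta-gamma-n4}--\ref{lemma:c_4-counts}) express $\pentagon(P)$, $\squareOne(P)$, $\triTwo(P)$, $\squareZero(P)$ through sums of $\gamma_{\triZero}$, $\gamma_{\triZero}^2$, $\beta_{\triZero}^2$ and $\beta_{\triZero}\gamma_{\triZero}$. Passing to the mean, variance and covariance of $\beta_{\triZero},\gamma_{\triZero}$ over a uniformly random triangle, the covariance is bounded by $\sigma_\beta\sigma_\gamma$ (Cauchy--Schwarz), and multiplying the two exact variance identities to match the squared covariance bound gives a single inequality in $x_P=960\pentagon(P)/n^3$ and the free parameter $\bgamma$; minimizing over $\bgamma$ (at $\bgamma=n/\sqrt5$) is what produces the $\sqrt5$. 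So the $\sqrt5$ comes from a continuous optimization over the mean region-occupancy of a random triangle coupled with a second-moment inequality — information about the joint distribution of $(\beta_{\triZero},\gamma_{\triZero})$ that is not recoverable from $5$- or $6$-point type densities alone. Your statistical framing is in the right spirit, but you would need to replace the missing $6$-point pointwise inequality with this triangle-centred variance/covariance machinery (or find a genuinely new certificate) for the proof to go through.
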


Note that as pointed out by~\citet{goaoc2018limitsordertypes}, the trivial bound in this problem is~$c_5 \geq  0.00793$, so while our bound is not as strong as the one obtained by~\citet{goaoc2018limitsordertypes}, it is still far better than the trivial bound, and that the SAT-based bounds of~\citet{subercaseaux2023minimizing,Battleman}.
We believe there is value in our presentation since it shows how a clever mixture of basic combinatorial techniques can be used to obtain a non-trivial lower bound on a discrete geometry problem, and in particular, it depicts how a statistical framing of deterministic objects can greatly simplify algebraic calculations. In this sense, our article resembles the work of~\citet{BONDY199771}, which is credited as an inspiration of the Flag Algebra method~\citep{silva2016flagalgebrasglance}.
Our proof is self-contained and does not require any computation. Moreover, we believe that it can serve as a way of introducing the Flag Algebra method (in its geometric flavor) without using Flag Algebras explicitly, but rather showing what performing such an analysis manually looks like, which then allows showing the Flag Algebra method as a way to automate these calculations.

Just as how the Flag Algebra method is traditionally explained starting from Mantel's theorem~(e.g.,~\citet{silva2016flagalgebrasglance,razborovFlagAlgebras2007,RasborovWhatIs}), we can also explain our approach as a way to derive Mantel's theorem.
\begin{theorem}[Mantel, 1907]
  For every $n > 3$, the maximum number of edges in a triangle-free graph $G = (V, E)$ is $\lfloor |V|^2/4 \rfloor$.
\end{theorem}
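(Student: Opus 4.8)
The plan is to carry out, in miniature, the same three moves that drive the pentagon bound: isolate a ``planar-point equation'' (here a graph double-counting identity), combine it with the one local consequence of the forbidden substructure, and close the remaining gap with a nonnegativity-of-variance step; integrality then upgrades the resulting real inequality to the claimed floor.

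Concretely, let $G=(V,E)$ be triangle-free with $|V|=n$ and $|E|=m$, and write $d_v$ for the degree of $v$. First I would record the only place triangle-freeness is used: for each edge $uv\in E$ the neighborhoods $N(u)$ and $N(v)$ are disjoint (a common neighbor would complete a triangle with $uv$), and both lie inside $V$, so $d_u+d_v\le n$. Next comes the counting identity: summing $d_u+d_v$ over all edges counts each vertex $v$ exactly $d_v$ times, contributing $d_v$ on each occasion, whence
\[
  \sum_{v\in V} d_v^2 \;=\; \sum_{uv\in E}\bigl(d_u+d_v\bigr)\;\le\; mn .
\]
Finally, the statistical step: letting $D$ denote the degree of a uniformly random vertex, we have $\Ex[D]=\tfrac{2m}{n}$ and $\Ex[D^2]=\tfrac1n\sum_v d_v^2\le m$, so from $\Var(D)\ge 0$, i.e.\ $\Ex[D^2]\ge \Ex[D]^2$, we obtain $\tfrac{4m^2}{n^2}\le m$, hence $m\le \tfrac{n^2}{4}$. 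Since $m\in\mathbb{Z}$ this forces $m\le \lfloor n^2/4\rfloor$, and the balanced complete bipartite graph $K_{\lfloor n/2\rfloor,\lceil n/2\rceil}$ shows the bound is attained.

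There is, honestly, no hard part in this particular argument---it is three lines---and that is precisely why it is worth stating: the difficulty in the pentagon setting is that neither the relevant counting identities nor the useful local constraints present themselves, and that several of them must be assembled before a variance-type inequality becomes effective. For readers who prefer a version that more closely mirrors the flag-algebra bookkeeping, I would also sketch the alternative: let $e_i$ be the probability that a uniformly random $3$-element subset of $V$ spans exactly $i$ edges, observe $e_3=0$ together with the identity $e_1+2e_2 = 3m/\binom{n}{2}$ (counting edges inside triples) and $e_2\binom{n}{3}=\sum_v\binom{d_v}{2}$, and then combine $e_1\ge 0$ with convexity of $x\mapsto\binom{x}{2}$ (once again a statement that a variance is nonnegative) to reach $m\le n^2/4$ anew. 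The main-text exposition would use the first, shorter route.
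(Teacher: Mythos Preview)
Your proof is correct and follows essentially the same route as the paper: the edge inequality $d_u+d_v\le n$ from triangle-freeness, the identity $\sum_v d_v^2=\sum_{uv\in E}(d_u+d_v)$, and the nonnegativity-of-variance step to conclude. You add the integrality observation and the extremal example $K_{\lfloor n/2\rfloor,\lceil n/2\rceil}$, which the paper omits, and your optional flag-algebra sketch is a nice extra, but the main argument is the paper's.
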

\begin{proof}
  First, let us identify the main ``object'' of this proof: vertex-degrees. This choice is not entirely obvious, and our choices for proving~\Cref{thm:main} won't be either, but they are sensible in hindsight; the intuition is that avoiding triangles limits the degrees of vertices in a graph. 
  Second, as in our proof, double-counting over the main object is crucial to get equations that capture global properties of how the object of interest behaves. In this case, the handshake lemma suffices:
  \begin{equation}\label{eq:handshake}
    \sum_{v \in V} \deg(v) = 2|E|,
  \end{equation}
  but a statistical perspective will be algebraically helpful, and thus we reframe~\Cref{eq:handshake} as 
  \begin{equation}\label{eq:stat-handshake}
    \Ex[\deg(v)] = 2|E|/|V|,
  \end{equation}
  where the expectation is over a uniformly random choice of vertex. Then, the only use of the triangle-free property is in the following equation:
\begin{equation}\label{eq:t-free}
  \deg(u) + \deg(v) \leq n, \quad \text{for every edge } (u, v),
\end{equation}
which holds since otherwise $u$ and $v$ would have a common neighbor and thus induce a triangle.
We must now turn~\Cref{eq:t-free} into a statistical equation, which can be done by summing it over the edges:
\begin{equation*}
  \sum_{(u, v) \in E} \left(\deg(u) + \deg(v)\right) \leq n|E| \implies \sum_{v \in V} \deg(v)^2 \leq n|E| \implies \Ex[\deg(v)^2] \leq |E|. 
\end{equation*}
We now combine our statistically framed equations as:
\[
  0 \leq \Var[\deg(v)] = \Ex[\deg(v)^2] - \Ex[\deg(v)]^2 \leq |E| - (2|E|/|V|)^2,
\]
from where the result directly follows.
\end{proof}
  

\section{Planar-Point Equations}\label{sec:ppe}

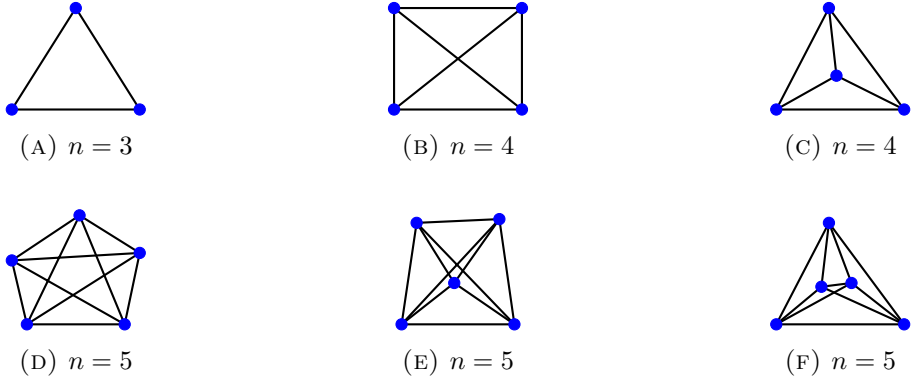
\begin{figure}
\begin{subfigure}{0.3\textwidth}
    \centering
    \begin{tikzpicture}
        \coordinate (a) at (0.1,-0.75);
        \coordinate (b) at (1.8, -0.75);
        \coordinate (c) at (0.95, 0.6);
    
        \draw[thick, -] (a) -- (b) -- (c)  -- cycle; 
    
        \node[draw, circle, fill, blue, inner sep=1.5] (na) at (a) {};
        \node[draw, circle, fill, blue, inner sep=1.5] (nb) at (b) {};
        \node[draw, circle, fill, blue, inner sep=1.5] (nc) at (c) {};
    \end{tikzpicture}
    \caption{$n=3$}
\end{subfigure}
\begin{subfigure}{0.3\textwidth}
    \centering
    \begin{tikzpicture}
        \coordinate (a) at (0.1,-0.75);
        \coordinate (b) at (1.8, -0.75);
        \coordinate (c) at (1.8, 0.6);
        \coordinate (d) at (0.1, 0.6);
    
        \draw[thick, -] (a) -- (b) -- (c) -- (d) -- cycle; 
        \draw[thick, -] (b) -- (d);
        \draw[thick, -] (a) -- (c);
    
        \node[draw, circle, fill, blue, inner sep=1.5] (na) at (a) {};
        \node[draw, circle, fill, blue, inner sep=1.5] (nb) at (b) {};
        \node[draw, circle, fill, blue, inner sep=1.5] (nc) at (c) {};
        \node[draw, circle, fill, blue, inner sep=1.5] (nd) at (d) {};
    \end{tikzpicture}
    \caption{$n=4$}
\end{subfigure}
\begin{subfigure}{0.3\textwidth}
    \centering
    \begin{tikzpicture}
        \coordinate (a) at (0.1,-0.75);
        \coordinate (b) at (1.8, -0.75);
        \coordinate (c) at (0.8, 0.6);
        \coordinate (d) at (0.9, -0.3);
    
        \draw[thick, -] (a) -- (b) -- (c)  -- cycle; 
        \draw[thick, -] (a) -- (d);
        \draw[thick, -] (b) -- (d);
        \draw[thick, -] (c) -- (d);
    
        \node[draw, circle, fill, blue, inner sep=1.5] (na) at (a) {};
        \node[draw, circle, fill, blue, inner sep=1.5] (nb) at (b) {};
        \node[draw, circle, fill, blue, inner sep=1.5] (nc) at (c) {};
        \node[draw, circle, fill, blue, inner sep=1.5] (nd) at (d) {};
    \end{tikzpicture}
    \caption{$n=4$}
\end{subfigure}

\vspace{1.5em}

\begin{subfigure}{0.3\textwidth}
    \centering
    \begin{tikzpicture}
        \coordinate (a) at (0.5,-0.75);
        \coordinate (b) at (1.8, -0.75);
        \coordinate (c) at (0.3, 0.1);
        \coordinate (d) at (2.0, 0.2);
        \coordinate (e) at (1.2, 0.7);

        \draw[thick, -] (a) -- (b) -- (c)  -- (d) -- (e) -- cycle;
        \draw[thick, -] (a) -- (d);
        \draw[thick, -] (b) -- (e);
        \draw[thick, -] (a) -- (c);
        \draw[thick, -] (b) -- (d);
        \draw[thick, -] (c) -- (e);

        \node[draw, circle, fill, blue, inner sep=1.5] (na) at (a) {};
        \node[draw, circle, fill, blue, inner sep=1.5] (nb) at (b) {};
        \node[draw, circle, fill, blue, inner sep=1.5] (nc) at (c) {};
        \node[draw, circle, fill, blue, inner sep=1.5] (nd) at (d) {};
        \node[draw, circle, fill, blue, inner sep=1.5] (ne) at (e) {};
    \end{tikzpicture}
    \caption{$n=5$}
\end{subfigure}
\begin{subfigure}{0.3\textwidth}
    \centering
    \begin{tikzpicture}
        \coordinate (a) at (0.3,-0.75);
        \coordinate (b) at (1.8, -0.75);
        \coordinate (c) at (0.5, 0.6);
        \coordinate (d) at (1.6, 0.65);
        \coordinate (e) at (1.0, -0.2);

        \draw[thick, -] (a) -- (b) -- (c)  -- (d) -- cycle;
        \draw[thick, -] (a) -- (e);
        \draw[thick, -] (b) -- (e);
        \draw[thick, -] (c) -- (e);
        \draw[thick, -] (d) -- (e);
        \draw[thick, -] (a) -- (c);
        \draw[thick, -] (b) -- (d);

        \node[draw, circle, fill, blue, inner sep=1.5] (na) at (a) {};
        \node[draw, circle, fill, blue, inner sep=1.5] (nb) at (b) {};
        \node[draw, circle, fill, blue, inner sep=1.5] (nc) at (c) {};
        \node[draw, circle, fill, blue, inner sep=1.5] (nd) at (d) {};
        \node[draw, circle, fill, blue, inner sep=1.5] (ne) at (e) {};
    \end{tikzpicture}
    \caption{$n=5$}
\end{subfigure}
\begin{subfigure}{0.3\textwidth}
    \centering
    \begin{tikzpicture}
        \coordinate (a) at (0.1,-0.75);
        \coordinate (b) at (1.8, -0.75);
        \coordinate (c) at (0.8, 0.6);
        \coordinate (d) at (0.7, -0.25);
        \coordinate (e) at (1.1, -0.2);

        \draw[thick, -] (a) -- (b) -- (c)  -- cycle;
        \draw[thick, -] (a) -- (d);
        \draw[thick, -] (b) -- (d);
        \draw[thick, -] (c) -- (d);
        \draw[thick, -] (a) -- (e);
        \draw[thick, -] (b) -- (e);
        \draw[thick, -] (c) -- (e);
        \draw[thick, -] (d) -- (e);

        \node[draw, circle, fill, blue, inner sep=1.5] (na) at (a) {};
        \node[draw, circle, fill, blue, inner sep=1.5] (nb) at (b) {};
        \node[draw, circle, fill, blue, inner sep=1.5] (nc) at (c) {};
        \node[draw, circle, fill, blue, inner sep=1.5] (nd) at (d) {};
        \node[draw, circle, fill, blue, inner sep=1.5] (ne) at (e) {};
    \end{tikzpicture}
    \caption{$n=5$}
\end{subfigure}

\caption{Illustration of the different convex hull cases for up to $5$ points. These define the \emph{types} of point placements for $n \in \{3, 4,5\}$.}\label{fig:types}
\end{figure}

Let us first introduce some notation. For the context of this article, a \emph{point placement} $P$ is a finite subset of $\mathbb{R}^2$ such that no three points in $P$ are collinear.
We will denote by $\mathcal{P}_n$ the set of all point placements of size $n$. Next, we focus on different~\emph{types} of placements in $\mathcal{P}_n$ for small values of $n$. Between $n=3$ and $n=5$, the \emph{type} of a placement $P$ can be defined by the number of points in the convex hull of $P$; as illustrated in~\Cref{fig:types}, there is a single type for $n=3$, two types for $n=4$ and three types for $n=5$.\footnote{Do not be fooled by the apparent pattern; there are $20$ \emph{types} for $n=6$ and $242$ for $n=7$. See~\cite[A006246]{oeis}.}
We avoid providing a general definition of \emph{types} as it is not currently needed for the rest of this paper; instead, we direct the interested reader to~\citet{Aichholzer_Aurenhammer_Krasser_2001,goaoc2018limitsordertypes}, or for a formal specification, to the notion of $\sigma$-equivalence in~\citet{subercaseaux2024formalverificationhexagonnumber}.
Let us remark immediately that an important difference between our work and that of~\citet{goaoc2018limitsordertypes} is that they consider types up to $n=11$ points, whereas we only consider types up to $n=5$ points to make our analysis manageable.

\begin{figure}[t]
    \centering
    \includegraphics[scale=0.75]{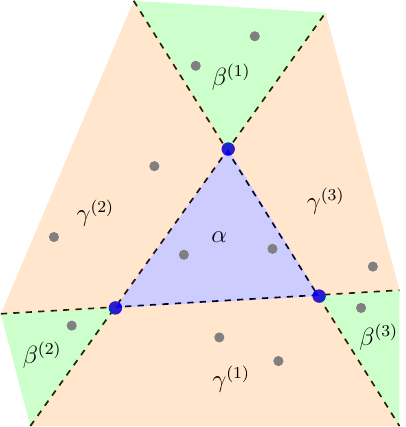}
    \caption{Illustration of the regions in a point placement induced by a given triangle, whose vertices are colored in blue.}
    \label{fig:point-configurations}
\end{figure}

For any point placement $P$, we will use $\triZero(P), \triOne(P), \squareZero(P), \squareOne(P),  \triTwo(P), \pentagon(P)$, to denote the number of subsets of $P$ of the corresponding type. 
We will prove several equations that relate these quantities, where all proofs are based on simple double-counting arguments.

Given a fixed triangle $\triZero$, we can partition the space into \emph{``regions''} with respect to $\triZero$, as depicted in~\Cref{fig:point-configurations}. Formally, given a triangle $abc$ and a point $x$, the \emph{``region''} in which $x$ is situated can be defined by cases on which half-planes $x$ belongs to (e.g., whether $x$ is ``above or below'' the lines $\vec{ab}$, $\vec{bc}$, $\vec{ac}$, which is defined by the corresponding cross products). However, we refrain from that formalization since it does not seem more helpful to a reader than~\Cref{fig:point-configurations}.

Given a fixed traingle $\triZero$, we will denote by $\beta_{\triZero}$ the number of points in the $\beta$-regions (i.e., $\beta^{(1)} \cup \beta^{(2)} \cup \beta^{(3)}$) with respect to $\triZero$ (see~\Cref{fig:point-configurations}), and by $\gamma_{\triZero}$ the number of points in the $\gamma$-regions with respect to $\triZero$. By definition, we have 
\[ 
    \beta_{\triZero} = \beta^{(1)}_{\triZero} + \beta^{(2)}_{\triZero} + \beta^{(3)}_{\triZero} \quad \text{and} \quad \gamma_{\triZero} = \gamma^{(1)}_{\triZero} + \gamma^{(2)}_{\triZero} + \gamma^{(3)}_{\triZero}.
\]
We have now the required notation to state and proof our first double-counting lemma.
\begin{lemma}\label{lemma:ppe-beta-gamma-n4}
    For any point placement $P$ in the plane, we have 
    \[
       \sum_{{\triZero} \in P} 4\beta_{\triZero}  + 3\gamma_{\triZero} = 12 \binom{n}{4}.
    \] 
\end{lemma}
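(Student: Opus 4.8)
The plan is a weighted double count. Consider the set of pairs $(\triZero, x)$ where $\triZero$ is one of the $\binom n3$ triangles spanned by $P$ and $x\in P$ is not a vertex of $\triZero$. Since no three points of $P$ are collinear, $x$ avoids the three lines supporting $\triZero$ and therefore lies in exactly one of its seven regions: the interior, one of the three $\beta$-regions (each touching the triangle only at a single vertex), or one of the three $\gamma$-regions (each adjacent to an edge of the triangle). I will evaluate $\sum_{\triZero \in P}\beta_{\triZero}$ and $\sum_{\triZero \in P}\gamma_{\triZero}$ by regrouping these pairs according to the $4$-point subset $Q = \{a,b,c\}\cup\{x\}$ they determine, using the elementary dichotomy that every such $Q$ is either in convex position (type $\squareZero$) or has exactly one point inside the triangle of the other three (type $\triOne$); in particular $\squareZero(P) + \triOne(P) = \binom n4$.

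The crux is two local claims, each a short case check on the three half-planes of a triangle. (i) If $Q = \{p,q,r,s\}$ is convex in this cyclic order, then for the triangle $pqr$ the side $pr$ is a diagonal of $Q$, the fourth point $s$ lies on the opposite side of line $pr$ from $q$, and $s$ lies on the same side of lines $pq$ and $qr$ as $r$ and $p$ respectively (because $pq$ and $qr$ are hull edges of $Q$); hence $s$ lies in the $\gamma$-region of $pqr$ adjacent to edge $pr$. The same holds for each of the four triangles of $Q$, so $Q$ contributes $4$ to $\sum_{\triZero \in P}\gamma_{\triZero}$ and $0$ to $\sum_{\triZero \in P}\beta_{\triZero}$. (ii) If $Q = \{a,b,c,d\}$ with $d$ inside triangle $abc$, then for the triangle $abd$: the line $ad$ meets the open segment $bc$ (the ray from $a$ through the interior point $d$ must exit through the opposite side), so line $ad$ separates $b$ from $c$; likewise line $bd$ separates $a$ from $c$; and $c$ and $d$ lie on the same side of line $ab$. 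Hence $c$ lies in the $\beta$-region of $abd$ incident to the vertex $d$, and symmetrically $b$ lies in the $\beta$-region of $acd$ at $d$ and $a$ lies in the $\beta$-region of $bcd$ at $d$; meanwhile in the triangle $abc$ the point $d$ is interior. Thus $Q$ contributes $3$ to $\sum_{\triZero \in P}\beta_{\triZero}$ and $0$ to $\sum_{\triZero \in P}\gamma_{\triZero}$ (plus $1$ to the interior count, which does not appear in the statement).

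Summing (i) and (ii) over all $4$-subsets of $P$ yields $\sum_{\triZero \in P}\beta_{\triZero} = 3\,\triOne(P)$ and $\sum_{\triZero \in P}\gamma_{\triZero} = 4\,\squareZero(P)$, whence
\[
  \sum_{\triZero \in P}\bigl(4\beta_{\triZero} + 3\gamma_{\triZero}\bigr) = 12\,\triOne(P) + 12\,\squareZero(P) = 12\binom n4 ,
\]
using $\squareZero(P) + \triOne(P) = \binom n4$; the weights $4$ and $3$ are precisely what is needed to make the two contributions coincide, since $4\cdot 3 = 3\cdot 4 = 12$. I expect the only real obstacle to be pinning down the incidences in (i) and (ii) exactly — in particular, checking that a point lying "across a diagonal" sits in an \emph{edge} ($\gamma$) region and not in a \emph{vertex} ($\beta$) region, and dually that the outside point of a $\triOne$-configuration sits in the $\beta$-region at the \emph{interior} point of the configuration rather than at a hull vertex — but both are immediate from the separation facts used above.
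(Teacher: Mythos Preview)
Your proposal is correct and follows essentially the same approach as the paper: both establish $\sum_{\triZero}\gamma_{\triZero}=4\,\squareZero(P)$ and $\sum_{\triZero}\beta_{\triZero}=3\,\triOne(P)$ by grouping the pairs $(\triZero,x)$ according to the $4$-subset they span, then combine with the identity $\squareZero(P)+\triOne(P)=\binom{n}{4}$. The only difference is that you spell out the half-plane case checks for (i) and (ii) explicitly, whereas the paper relegates these to a figure and the phrase ``as otherwise the fixed quadrilateral would not be convex.''
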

\begin{proof}
Consider a fixed $\squareZero \in P$, that is, a set of $4$ points that form a convex quadrilateral.
Then, there are $4$ ways to choose a $\triZero$ such that $\triZero \subseteq \squareZero$; that is, there are $4$ triangles formed by vertices in $\squareZero$.
For each such triangle $\triZero$, the single point in $\squareZero \setminus \triZero$ will be in the $\gamma$-regions (illustrated in~\Cref{fig:point-configurations}) with respect to $\triZero$, as otherwise the fixed quadrilateral would not be convex. 
Furthermore, having fixed a triangle $\triZero$, every point in the $\gamma$-regions with respect to $\triZero$ forms a convex quadrilateral with $\triZero$. We therefore have
\begin{equation}\label{eq:-gamma-n4}
    \sum_{\triZero \in P} \gamma_{\triZero} = 4 \, \squareZero(P).
\end{equation}
From now on, we will summarize this kind of argument by saying that
\begin{center}
    \emph{``Every $\squareZero$ is counted $4$ times as a $\gamma$-configuration with respect to $\triZero$.''}
\end{center}
Similarly, we can say that 
\begin{center}
    \emph{``Every $\triOne$ is counted $3$ times as a $\beta$-configuration.''}
\end{center}

Which can be written as:
\begin{equation}\label{eq:-beta-n4}
    \sum_{\triZero \in P} \beta_{\triZero} = 3 \, \triOne(P).
\end{equation}
Note that all subsets of four points in $P$ are either a $\squareZero$ or a $\triOne$, from where
\[
    \squareZero(P) + \triOne(P) = \binom{n}{4}.
\]
By adding $3$ times~\Cref{eq:-gamma-n4} to $4$ times~\Cref{eq:-beta-n4}, we obtain the desired result, as 
\[
    \sum_{\triZero \in P} 4\beta_{\triZero}  + 3\gamma_{\triZero} = 12 \, \squareZero(P) + 12\, {\triOne}(P) = 12 \binom{n}{4}.
\]
\end{proof}

\begin{lemma}
    For any point placement $P$ in the plane, we have 
    \[ 
        4 \pentagon(P) = 4 \binom{n}{5} - \sum_{\triZero \in P} \beta_{\triZero} \gamma_{\triZero}.
    \] 
    \label{lemma:beta-gamma-count}
\end{lemma}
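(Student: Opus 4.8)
The plan is to double-count, over all triangles $\triZero \in P$, the number of subsets of five points of $P$ that are \emph{not} convex pentagons, and compare it with the number that are. Fix a triangle $\triZero$ with vertex set $\{a,b,c\}$. For each unordered pair of distinct points $\{x, y\} \subseteq P \setminus \{a,b,c\}$ with one point in a $\beta$-region and the other in a $\gamma$-region, the five-point set $\{a,b,c,x,y\}$ fails to be a convex pentagon, because the $\beta$-point lies inside a ``pocket'' of the triangle and cannot be a vertex of a convex pentagon containing all three of $a,b,c$. Conversely, I would argue that every non-pentagon five-point set is counted exactly once this way: given five points in general position that do not form a convex pentagon, exactly one of them lies inside the convex hull of the other four (if the hull is a quadrilateral) or the interior structure forces a unique ``offending'' triangle-plus-$\beta$-point configuration among the $\binom{5}{3}$ triangles, and I need to check that the remaining point then sits in a $\gamma$-region of that triangle. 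This gives $\sum_{\triZero \in P} \beta_{\triZero}\gamma_{\triZero} = $ (number of non-pentagon $5$-subsets) $= \binom{n}{5} - \pentagon(P)$, and multiplying by $4$ yields the claim — though the exact multiplicity $4$ suggests the bookkeeping is slightly more involved than ``exactly once,'' so the clean statement is that each non-pentagon is counted with total weight... let me reconsider.

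\textbf{Revised plan.} The factor $4$ on the left matches the factor $4$ appearing in \Cref{eq:-gamma-n4} ($\sum \gamma_{\triZero} = 4\,\squareZero(P)$), which strongly suggests the right framing: I would fix not a triangle but a \emph{convex quadrilateral} $\squareZero' \in P$ together with a fifth point, and route the count through \Cref{lemma:ppe-beta-gamma-n4}'s style of argument. Concretely, start from the identity $\binom{n}{5} = \pentagon(P) + \squareOne(P) + \triTwo(P)$, partitioning all $5$-subsets by type (these are the three $n=5$ types in \Cref{fig:types}: the convex pentagon, the convex quadrilateral with one interior point, and the triangle with two interior points). Then I would show $\sum_{\triZero \in P} \beta_{\triZero}\gamma_{\triZero}$ counts exactly $4\,\squareOne(P) + 4\,\triTwo(P)$ — i.e. $4$ times the number of non-pentagons — by the following two sub-counts. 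For a $\squareOne$ configuration (convex quadrilateral $q_1q_2q_3q_4$ with interior point $p$): among its $\binom{5}{3}=10$ triangles, count the pairs (triangle $\triZero$, pair consisting of a $\beta$-point and a $\gamma$-point of $\triZero$ drawn from the other two points of the configuration) — I claim this total is $4$, matching one per ``edge triangle'' of the quadrilateral, and I would verify it by a small case check on the one combinatorial type. For a $\triTwo$ configuration, do the analogous count and again obtain $4$.

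\textbf{Execution order.} First, state the type decomposition $\binom{n}{5} = \pentagon(P) + \squareOne(P) + \triTwo(P)$. Second, prove the local counting claim for a single $\squareOne$ configuration: enumerate which of its $10$ sub-triangles $\triZero$ have the property that the remaining two points split as one-$\beta$-one-$\gamma$, and show the count (as ordered pairs contributing to $\beta_{\triZero}\gamma_{\triZero}$, which is symmetric so really unordered pairs with the right types, weighted appropriately) equals $4$; the key sub-fact is that a $\beta$-point of $\triZero$ relative to its neighbors is precisely a point that ``blocks'' convexity. Third, prove the analogous claim for $\triTwo$, which should again give $4$ by a symmetric argument. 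Fourth, sum over all configurations: $\sum_{\triZero \in P}\beta_{\triZero}\gamma_{\triZero} = 4\,\squareOne(P) + 4\,\triTwo(P) = 4\big(\binom{n}{5} - \pentagon(P)\big)$, and rearrange. The main obstacle I anticipate is the local case analysis in the second and third steps: correctly identifying, for each of the $10$ sub-triangles of a $5$-point configuration, whether the two leftover points realize the ``one in $\beta$, one in $\gamma$'' pattern, and confirming that the contributions to $\beta_{\triZero}\gamma_{\triZero}$ sum to exactly $4$ in \emph{both} non-pentagon types (rather than, say, $4$ in one and something else in the other). Getting this multiplicity exactly right — possibly by a clean symmetry or incidence argument rather than brute enumeration — is where the real work lies; everything else is assembly.
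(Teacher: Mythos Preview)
Your revised plan and execution order match the paper's proof essentially line for line: the paper also starts from the type decomposition $\binom{n}{5}=\pentagon(P)+\squareOne(P)+\triTwo(P)$, then shows $\sum_{\triZero}\beta_{\triZero}\gamma_{\triZero}=4\,\squareOne(P)+4\,\triTwo(P)$ by verifying that each $\squareOne$ and each $\triTwo$ is counted exactly four times as a $\beta\gamma$-configuration (handled there by explicit pictures rather than a symmetry argument), and rearranges. The only thing you leave implicit that the paper also leaves implicit is that a convex pentagon contributes zero to the sum, which follows because every fourth point added to a sub-triangle of a $\pentagon$ lands in a $\gamma$-region.
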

\begin{proof}
First, observe that
\begin{equation}\label{eq:sum-5}
    \pentagon(P) + \squareOne(P)  + \triTwo(P) = \binom{n}{5}. 
\end{equation}
It thus remains to show that 
\[
    4\left(\squareOne(P)  + \triTwo(P)\right) = \sum_{\triZero \in P} \beta_{\triZero} \gamma_{\triZero}.
\]
Indeed, if we consider a fixed $\squareOne(P)$, it will be counted $4$ times as a $\beta\gamma$-configuration (i.e., as a $\triZero$, a point in its $\beta$-regions, and another point in its $\gamma$-regions); this is illustrated in~\Cref{fig:proof-beta-gamma-count}. Similarly, if we consider a fixed $\triTwo(P)$, it will be counted $4$ times as a $\beta\gamma$-configuration, as illustrated in~\Cref{fig:proof-beta-gamma-count-2}.
The lemma follows directly by combining the two equations above.

\begin{figure}
    \begin{subfigure}{0.49\textwidth}
        \centering
    \begin{tikzpicture}[scale=0.75]
        \node[draw, circle, blue, inner sep=0.95mm, fill] (a) at (0, 0) {};
        \node[draw, circle, blue, inner sep=0.95mm, fill] (b) at (1.5, 2.5) {};
        \node[draw, circle, blue, inner sep=0.95mm, fill] (e) at (5, 0) {};
        \node[draw, circle, blue, inner sep=0.95mm, fill] (d) at (4, 2.7) {};
        \node[draw, circle, blue, inner sep=0.95mm, fill] (c) at (2.7, 1.1 ) {};
        \node[] (al) at (-0.4, -0.4) {$a$};
        \node[] (bl) at (1.1, 2.9) {$b$};
        \node[] (cl) at (2.7, 0.7) {$c$};
        \node[] (dl) at (4.3, 3.1) {$d$};
        \node[] (el) at (5.3, -0.4) {$e$};

        \draw[-, ultra thick, purple] (a) -- (b);
        \draw[-,  ultra thick, purple] (a) -- (c);
        \draw[-,  thick] (a) -- (d);
        \draw[-, thick] (a) -- (e);
        \draw[-, ultra thick, purple] (b) -- (c);
        \draw[-, thick] (b) -- (d);
        \draw[-, thick] (b) -- (e);
        \draw[-,  thick] (c) -- (d);
        \draw[-, thick] (c) -- (e);
        \draw[-, thick] (d) -- (e);
    \end{tikzpicture}
    \caption{}\label{fig:p-sub1}
    \end{subfigure}
    \begin{subfigure}{0.49\textwidth}
        \centering
    \begin{tikzpicture}[scale=0.75]
        \node[draw, circle, blue, inner sep=0.95mm, fill] (a) at (0, 0) {};
        \node[draw, circle, blue, inner sep=0.95mm, fill] (b) at (1.5, 2.5) {};
        \node[draw, circle, blue, inner sep=0.95mm, fill] (e) at (5, 0) {};
        \node[draw, circle, blue, inner sep=0.95mm, fill] (d) at (4, 2.7) {};
        \node[draw, circle, blue, inner sep=0.95mm, fill] (c) at (2.7, 1.1 ) {};
        \node[] (al) at (-0.4, -0.4) {$a$};
        \node[] (bl) at (1.1, 2.9) {$b$};
        \node[] (cl) at (2.7, 0.7) {$c$};
        \node[] (dl) at (4.3, 3.1) {$d$};
        \node[] (el) at (5.3, -0.4) {$e$};

        \draw[-, thick] (a) -- (b);
        \draw[-,  thick] (a) -- (c);
        \draw[-,  thick] (a) -- (d);
        \draw[-, thick] (a) -- (e);
        \draw[-, thick] (b) -- (c);
        \draw[-, thick] (b) -- (d);
        \draw[-, thick] (b) -- (e);
        \draw[-,  ultra thick, purple] (c) -- (d);
        \draw[-, ultra thick, purple] (c) -- (e);
        \draw[-,ultra thick, purple] (d) -- (e);
    \end{tikzpicture}
    \caption{}\label{fig:p-sub2}
\end{subfigure}

    \begin{subfigure}{0.49\textwidth}
        \centering
    \begin{tikzpicture}[scale=0.75]
        \node[draw, circle, blue, inner sep=0.95mm, fill] (a) at (0, 0) {};
        \node[draw, circle, blue, inner sep=0.95mm, fill] (b) at (1.5, 2.5) {};
        \node[draw, circle, blue, inner sep=0.95mm, fill] (e) at (5, 0) {};
        \node[draw, circle, blue, inner sep=0.95mm, fill] (d) at (4, 2.7) {};
        \node[draw, circle, blue, inner sep=0.95mm, fill] (c) at (2.7, 1.1 ) {};
        \node[] (al) at (-0.4, -0.4) {$a$};
        \node[] (bl) at (1.1, 2.9) {$b$};
        \node[] (cl) at (2.7, 0.7) {$c$};
        \node[] (dl) at (4.3, 3.1) {$d$};
        \node[] (el) at (5.3, -0.4) {$e$};

        \draw[-, thick] (a) -- (b);
        \draw[-,  ultra thick, purple] (a) -- (c);
        \draw[-,  ultra thick, purple] (a) -- (d);
        \draw[-, thick] (a) -- (e);
        \draw[-, thick] (b) -- (c);
        \draw[-, thick] (b) -- (d);
        \draw[-, thick] (b) -- (e);
        \draw[-,  ultra thick, purple] (c) -- (d);
        \draw[-, thick] (c) -- (e);
        \draw[-, thick] (d) -- (e);
    \end{tikzpicture}
    \caption{}\label{fig:p-sub3}
    \end{subfigure}
    \begin{subfigure}{0.49\textwidth}
        \centering
    \begin{tikzpicture}[scale=0.75]
        \node[draw, circle, blue, inner sep=0.95mm, fill] (a) at (0, 0) {};
        \node[draw, circle, blue, inner sep=0.95mm, fill] (b) at (1.5, 2.5) {};
        \node[draw, circle, blue, inner sep=0.95mm, fill] (e) at (5, 0) {};
        \node[draw, circle, blue, inner sep=0.95mm, fill] (d) at (4, 2.7) {};
        \node[draw, circle, blue, inner sep=0.95mm, fill] (c) at (2.7, 1.1 ) {};
        \node[] (al) at (-0.4, -0.4) {$a$};
        \node[] (bl) at (1.1, 2.9) {$b$};
        \node[] (cl) at (2.7, 0.7) {$c$};
        \node[] (dl) at (4.3, 3.1) {$d$};
        \node[] (el) at (5.3, -0.4) {$e$};

        \draw[-, thick] (a) -- (b);
        \draw[-,  thick] (a) -- (c);
        \draw[-,  thick] (a) -- (d);
        \draw[-, thick] (a) -- (e);
        \draw[-, ultra thick, purple] (b) -- (c);
        \draw[-, thick] (b) -- (d);
        \draw[-, ultra thick, purple] (b) -- (e);
        \draw[-,  thick] (c) -- (d);
        \draw[-, ultra thick, purple] (c) -- (e);
        \draw[-, thick] (d) -- (e);
    \end{tikzpicture}
    \caption{}\label{fig:p-sub4}
    \end{subfigure}

    \caption{Illustration for part of the proof of~\Cref{lemma:beta-gamma-count}, illustrating the 4 different $\triZero$ such that the remaining two points form a $\beta\gamma$-configuration. In other words, this figure illustrates how each $\squareOne$ is counted 4 times as a $\beta\gamma$-configuration.}\label{fig:proof-beta-gamma-count}
\end{figure}
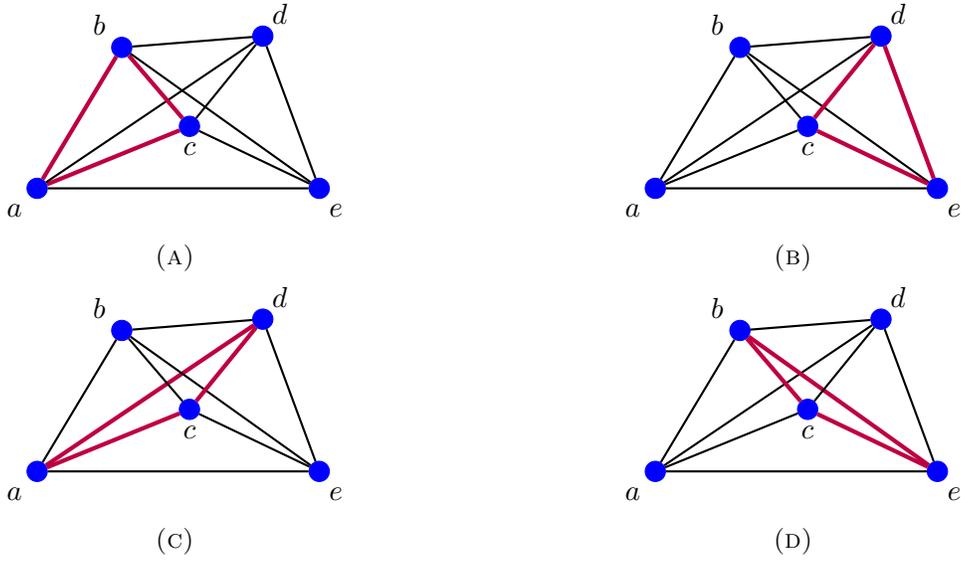

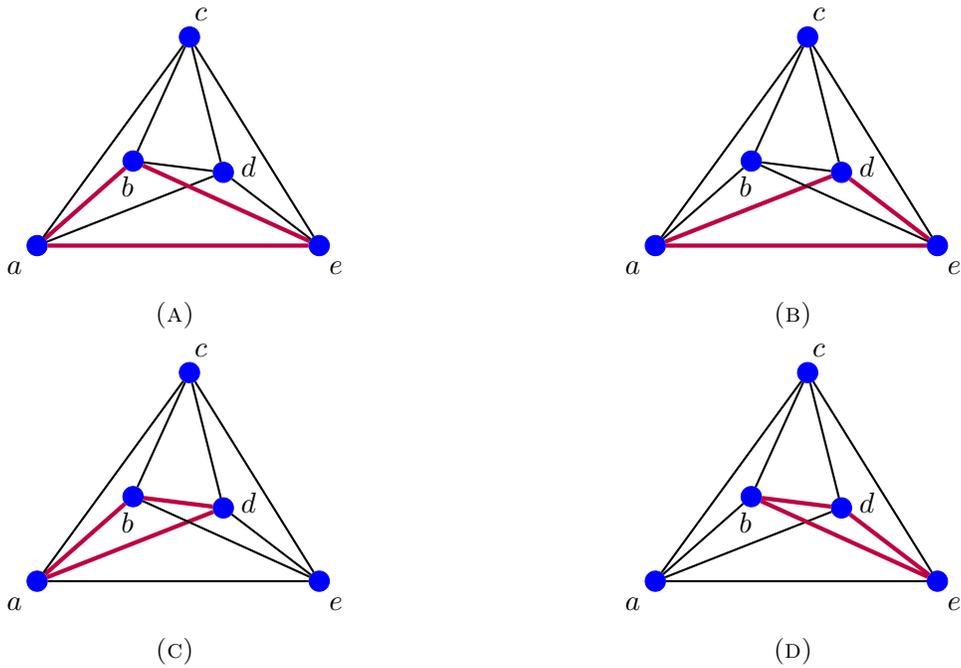
\begin{figure}
    \begin{subfigure}{0.49\textwidth}
        \centering
    \begin{tikzpicture}[scale=0.75]
        \node[draw, circle, blue, inner sep=0.95mm, fill] (a) at (0, 0) {};
        \node[draw, circle, blue, inner sep=0.95mm, fill] (b) at (1.7, 1.5) {};
        \node[draw, circle, blue, inner sep=0.95mm, fill] (e) at (5, 0) {};
        \node[draw, circle, blue, inner sep=0.95mm, fill] (c) at (2.7, 3.7) {};
        \node[draw, circle, blue, inner sep=0.95mm, fill] (d) at (3.3, 1.3 ) {};
        \node[] (al) at (-0.4, -0.4) {$a$};
        \node[] (bl) at (1.6, 1.05) {$b$};
        \node[] (cl) at (2.9, 4.1) {$c$};
        \node[] (dl) at (3.75, 1.4) {$d$};
        \node[] (el) at (5.3, -0.4) {$e$};

        \draw[-, ultra thick, purple] (a) -- (b);
        \draw[-,  thick] (a) -- (c);
        \draw[-, thick] (a) -- (d);
        \draw[-, ultra thick, purple] (a) -- (e);
        \draw[-, thick] (b) -- (c);
        \draw[-, thick] (b) -- (d);
        \draw[-, ultra thick, purple] (b) -- (e);
        \draw[-,  thick] (c) -- (d);
        \draw[-, thick] (c) -- (e);
        \draw[-, thick] (d) -- (e);
    \end{tikzpicture}
    \caption{}\label{fig:q-sub1}
    \end{subfigure}
    \begin{subfigure}{0.49\textwidth}
        \centering
    \begin{tikzpicture}[scale=0.75]
        \node[draw, circle, blue, inner sep=0.95mm, fill] (a) at (0, 0) {};
        \node[draw, circle, blue, inner sep=0.95mm, fill] (b) at (1.7, 1.5) {};
        \node[draw, circle, blue, inner sep=0.95mm, fill] (e) at (5, 0) {};
        \node[draw, circle, blue, inner sep=0.95mm, fill] (c) at (2.7, 3.7) {};
        \node[draw, circle, blue, inner sep=0.95mm, fill] (d) at (3.3, 1.3 ) {};
        \node[] (al) at (-0.4, -0.4) {$a$};
        \node[] (bl) at (1.6, 1.05) {$b$};
        \node[] (cl) at (2.9, 4.1) {$c$};
        \node[] (dl) at (3.75, 1.4) {$d$};
        \node[] (el) at (5.3, -0.4) {$e$};

        \draw[-, thick] (a) -- (b);
        \draw[-,  thick] (a) -- (c);
        \draw[-,  ultra thick, purple] (a) -- (d);
        \draw[-, ultra thick, purple] (a) -- (e);
        \draw[-, thick] (b) -- (c);
        \draw[-, thick] (b) -- (d);
        \draw[-, thick] (b) -- (e);
        \draw[-,  thick] (c) -- (d);
        \draw[-, thick] (c) -- (e);
        \draw[-, ultra thick, purple] (d) -- (e);
    \end{tikzpicture}
    \caption{}\label{fig:q-sub2}
\end{subfigure}

    \begin{subfigure}{0.49\textwidth}
        \centering
    \begin{tikzpicture}[scale=0.75]
        \node[draw, circle, blue, inner sep=0.95mm, fill] (a) at (0, 0) {};
        \node[draw, circle, blue, inner sep=0.95mm, fill] (b) at (1.7, 1.5) {};
        \node[draw, circle, blue, inner sep=0.95mm, fill] (e) at (5, 0) {};
        \node[draw, circle, blue, inner sep=0.95mm, fill] (c) at (2.7, 3.7) {};
        \node[draw, circle, blue, inner sep=0.95mm, fill] (d) at (3.3, 1.3 ) {};
        \node[] (al) at (-0.4, -0.4) {$a$};
        \node[] (bl) at (1.6, 1.05) {$b$};
        \node[] (cl) at (2.9, 4.1) {$c$};
        \node[] (dl) at (3.75, 1.4) {$d$};
        \node[] (el) at (5.3, -0.4) {$e$};

        \draw[-, ultra thick, purple] (a) -- (b);
        \draw[-,  thick] (a) -- (c);
        \draw[-,  ultra thick, purple] (a) -- (d);
        \draw[-, thick] (a) -- (e);
        \draw[-, thick] (b) -- (c);
        \draw[-, ultra thick, purple] (b) -- (d);
        \draw[-, thick] (b) -- (e);
        \draw[-,  thick] (c) -- (d);
        \draw[-, thick] (c) -- (e);
        \draw[-, thick] (d) -- (e);
    \end{tikzpicture}
    \caption{}\label{fig:q-sub3}
    \end{subfigure}
    \begin{subfigure}{0.49\textwidth}
        \centering
    \begin{tikzpicture}[scale=0.75]
        \node[draw, circle, blue, inner sep=0.95mm, fill] (a) at (0, 0) {};
        \node[draw, circle, blue, inner sep=0.95mm, fill] (b) at (1.7, 1.5) {};
        \node[draw, circle, blue, inner sep=0.95mm, fill] (e) at (5, 0) {};
        \node[draw, circle, blue, inner sep=0.95mm, fill] (c) at (2.7, 3.7) {};
        \node[draw, circle, blue, inner sep=0.95mm, fill] (d) at (3.3, 1.3 ) {};
        \node[] (al) at (-0.4, -0.4) {$a$};
        \node[] (bl) at (1.6, 1.05) {$b$};
        \node[] (cl) at (2.9, 4.1) {$c$};
        \node[] (dl) at (3.75, 1.4) {$d$};
        \node[] (el) at (5.3, -0.4) {$e$};

        \draw[-, thick] (a) -- (b);
        \draw[-,  thick] (a) -- (c);
        \draw[-,  thick] (a) -- (d);
        \draw[-, thick] (a) -- (e);
        \draw[-, thick] (b) -- (c);
        \draw[-, ultra thick, purple] (b) -- (d);
        \draw[-, ultra thick, purple] (b) -- (e);
        \draw[-,  thick] (c) -- (d);
        \draw[-, thick] (c) -- (e);
        \draw[-, ultra thick, purple] (d) -- (e);
    \end{tikzpicture}
    \caption{}\label{fig:q-sub4}
    \end{subfigure}

    \caption{Illustration for part of the proof of~\Cref{lemma:beta-gamma-count}, illustrating how a $\triTwo$ is counted 4 times as a $\beta\gamma$-configuration.}\label{fig:proof-beta-gamma-count-2}
\end{figure}

\end{proof}

\begin{lemma}\label{lemma:gamma-counts}
For any $n$-point placement $P$ in the plane, we have 
\[
    \sum_{\triZero \in P} \left(\binom{\gamma^{(1)}_{\triZero}}{2} + \binom{\gamma^{(2)}_{\triZero}}{2} + \binom{\gamma^{(3)}_{\triZero}}{2}\right) = 5\pentagon(P) + 2\squareOne(P),
\]
and 
\[
    \sum_{\triZero \in P} \left(\gamma^{(1)}_{\triZero}\gamma^{(2)}_{\triZero} + \gamma^{(2)}_{\triZero}\gamma^{(3)}_{\triZero} + \gamma^{(1)}_{\triZero}\gamma^{(3)}_{\triZero}  \right) = 5\pentagon(P) + \squareOne(P).
\]
\end{lemma}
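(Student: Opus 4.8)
Here is how I would approach it.

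\smallskip
\noindent\textbf{Proof plan.} The plan is to prove both identities at once by a double-counting over configurations consisting of a triangle $\triZero \subseteq P$ together with an unordered pair of further points of $P$ lying in $\gamma$-regions of $\triZero$. Note that $\binom{\gamma^{(1)}_{\triZero}}{2}+\binom{\gamma^{(2)}_{\triZero}}{2}+\binom{\gamma^{(3)}_{\triZero}}{2}$ counts the pairs of points of $P \setminus \triZero$ that lie in a \emph{common} $\gamma$-region of $\triZero$, while $\gamma^{(1)}_{\triZero}\gamma^{(2)}_{\triZero}+\gamma^{(2)}_{\triZero}\gamma^{(3)}_{\triZero}+\gamma^{(1)}_{\triZero}\gamma^{(3)}_{\triZero}$ counts those lying in two \emph{distinct} $\gamma$-regions. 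Summing over $\triZero$ and regrouping each configuration according to the $5$-point set $S = \triZero \cup \{x,y\}$ it determines, the two left-hand sides become $\sum_{S} s(S)$ and $\sum_{S} d(S)$, where the sums run over $5$-point subsets $S \subseteq P$, $s(S)$ is the number of sub-triangles $\triZero \subseteq S$ whose two complementary points lie in the same $\gamma$-region of $\triZero$, and $d(S)$ is the analogous count for two distinct $\gamma$-regions. As $s(S)$ and $d(S)$ depend only on the type of $S$, it suffices to prove $(s,d) = (5,5)$, $(2,1)$, $(0,0)$ according as $S$ has type $\pentagon$, $\squareOne$, $\triTwo$. Two elementary facts, both evident from the description of the regions induced by a triangle (cf.\ \Cref{fig:point-configurations}), will be used repeatedly: (F1) four points are in convex position with cyclic order $w_1w_2w_3w_4$ if and only if $w_4$ lies in the $\gamma$-region of the edge $w_1w_3$ of $\triangle w_1w_2w_3$; and (F2) if $w_4 \in \operatorname{int}(\triangle w_1w_2w_3)$, then $w_3$ lies in the $\beta$-region of $\triangle w_1w_2w_4$ beyond the vertex $w_4$, hence in no $\gamma$-region of $\triangle w_1w_2w_4$.

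For a $\pentagon$, every $4$-subset is a $\squareZero$, so by (F1) both complementary points of any sub-triangle lie in $\gamma$-regions, whence $s+d = \binom{5}{3} = 10$; splitting the ten sub-triangles by the cyclic gap pattern of their vertices — $(1,1,3)$ for the five triangles on three consecutive vertices and $(1,2,2)$ for the other five — one checks that the former put both complementary points beyond the long chord (one common $\gamma$-region) and the latter put them beyond the two ``skipping'' chords (two distinct $\gamma$-regions), giving $(s,d)=(5,5)$. For a $\squareOne$, I would write the hull as a convex quadrilateral $abcd$ (cyclically) with interior point $p$, placed WLOG in the cell of the two diagonals adjacent to edge $ad$, so that $p \in \operatorname{int}(\triangle abd) \cap \operatorname{int}(\triangle acd)$ (hence $p \notin \operatorname{int}(\triangle abc)\cup\operatorname{int}(\triangle bcd)$), and then run through the ten sub-triangles using (F1), (F2), and the convexity of the relevant $4$-subsets: $\triangle abc$ and $\triangle bcd$ each place both complementary points in a common $\gamma$-region; $\triangle pbc$ places its complementary points $a,d$ in the distinct $\gamma$-regions of the edges $pb$ and $pc$; and each remaining sub-triangle has a complementary point that is either $p$ lying inside it ($\triangle abd$, $\triangle acd$) or a vertex lying in a $\beta$-region ($\triangle pad$, $\triangle pab$, $\triangle pcd$, $\triangle pac$, $\triangle pbd$). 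This yields $(s,d) = (2,1)$.

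The decisive case is $\triTwo$; here $\triTwo$ is not a single order type, so the count must come out the same for all of them. Let the hull be $\triangle XYZ$ with interior points $p,q$. If $\triZero = \triangle XYZ$, its complementary points $p,q$ are interior, hence in no $\gamma$-region. If $\triZero$ uses exactly one interior point $r$, say $\triZero = \triangle z_1 z_2 r$ with $\{z_1,z_2,z_3\}=\{X,Y,Z\}$, then $r \in \operatorname{int}(\triangle z_1z_2z_3)$, so by (F2) the complementary vertex $z_3$ lies in a $\beta$-region of $\triZero$. The only remaining type of sub-triangle is $\triZero = \triangle Xpq$ with $X$ a hull vertex, whose complementary points are the two hull vertices $Y,Z$; here I must show $\{X,p,q,Y\}$ and $\{X,p,q,Z\}$ are not both convex. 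Assume they are. Then none of the four points of either quadrilateral lies in the triangle of the other three; in particular $q \notin \operatorname{int}(\triangle XpY)$, $q \notin \operatorname{int}(\triangle XpZ)$, $p \notin \operatorname{int}(\triangle XqY)$, and $p \notin \operatorname{int}(\triangle XqZ)$. The segments $pX, pY, pZ$ partition $\operatorname{int}(\triangle XYZ)$ into $\operatorname{int}(\triangle pXY)$, $\operatorname{int}(\triangle pYZ)$, $\operatorname{int}(\triangle pXZ)$ together with the segments themselves, and $q$ misses those segments by general position, so the first two exclusions leave $q \in \operatorname{int}(\triangle pYZ)$ as the only option; symmetrically $p \in \operatorname{int}(\triangle qYZ)$. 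But $q \in \operatorname{int}(\triangle pYZ)$ implies that $q$ is strictly closer to the line $YZ$ than $p$ is — since $\triangle pYZ$ lies in the closed slab bounded by line $YZ$ and the parallel line through $p$ — while $p \in \operatorname{int}(\triangle qYZ)$ implies the opposite strict inequality, a contradiction. Hence $(s,d) = (0,0)$ for $\triTwo$, and summing the three cases proves both identities. I expect this last contradiction and the case bookkeeping for $\squareOne$ to be the only delicate parts; in particular one must check that the ``distance to line $YZ$'' comparison is strict, which is precisely where the general-position hypothesis is used.
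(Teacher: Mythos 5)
Your proposal is correct and follows essentially the same double-counting strategy as the paper: classify each $5$-point subset by its type and check that a convex pentagon is counted $5$ times in each sum while a $\squareOne$ is counted twice and once, respectively. The only difference is one of rigor, not of route: you make explicit (with the slab/distance-to-$YZ$ argument) that $\triTwo$ configurations contribute nothing, a point the paper leaves implicit and supports only by its figures, and your multiplicity counts $(5,5)$, $(2,1)$, $(0,0)$ agree with the paper's claims.
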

\begin{proof}
For the first equation, we start by noting that for any $\triZero \in P$, if we pick two points from the same $\gamma$-region, we obtain either a $\pentagon$ or a $\squareOne$. Furthermore, we claim that every $\pentagon$ will be counted $5$ times this way, whereas every $\squareOne$ will only be counted twice. Indeed, every $\pentagon$ will be counted this way as long as the $\triZero$ consists of $3$ consecutive vertices of the $\pentagon$, which can be done in $5$ ways, as illustrated in~\Cref{fig:proof-gamma-counts-2}. \Cref{fig:proof-gamma-counts} illustrates how each $\squareOne$ will be counted twice. 
The second equation is obtained through an analogous counting argument: by considering the number of $\triZero$ in each fixed $\pentagon$ or $\squareOne$ such that the two remaining points are in different $\gamma$-regions.
\begin{figure}
    \begin{subfigure}{0.3\textwidth}
        \centering
        \begin{tikzpicture}[scale=0.6]
            \node[draw, circle, blue, inner sep=0.95mm, fill] (a) at (0.3, 0) {};
            \node[draw, circle, blue, inner sep=0.95mm, fill] (b) at (1.5, 2.5) {};
            \node[draw, circle, blue, inner sep=0.95mm, fill] (e) at (5, 0) {};
            \node[draw, circle, blue, inner sep=0.95mm, fill] (d) at (4, 2.7) {};
            \node[draw, circle, blue, inner sep=0.95mm, fill] (c) at (2.7, -1.1 ) {};
            \node[] (al) at (-0.4, -0.4) {$a$};
            \node[] (bl) at (1.1, 2.9) {$b$};
            \node[] (cl) at (2.7, -1.6) {$c$};
            \node[] (dl) at (4.4, 3.1) {$d$};
            \node[] (el) at (5.3, -0.4) {$e$};
    
            \draw[-, ultra thick, purple] (a) -- (b);
            \draw[-, ultra thick, purple] (a) -- (c);
            \draw[-,  thick] (a) -- (d);
            \draw[-, thick] (a) -- (e);
            \draw[-, ultra thick, purple] (b) -- (c);
            \draw[-, thick] (b) -- (d);
            \draw[-, thick] (b) -- (e);
            \draw[-, thick] (c) -- (d);
            \draw[-, thick] (c) -- (e);
            \draw[-, thick] (d) -- (e);
        \end{tikzpicture}
    
    \caption{}\label{fig:r-sub1}
    \end{subfigure}
    \begin{subfigure}{0.3\textwidth}
        \centering
    \begin{tikzpicture}[scale=0.6]
        \node[draw, circle, blue, inner sep=0.95mm, fill] (a) at (0.3, 0) {};
        \node[draw, circle, blue, inner sep=0.95mm, fill] (b) at (1.5, 2.5) {};
        \node[draw, circle, blue, inner sep=0.95mm, fill] (e) at (5, 0) {};
        \node[draw, circle, blue, inner sep=0.95mm, fill] (d) at (4, 2.7) {};
        \node[draw, circle, blue, inner sep=0.95mm, fill] (c) at (2.7, -1.1 ) {};
        \node[] (al) at (-0.4, -0.4) {$a$};
        \node[] (bl) at (1.1, 2.9) {$b$};
        \node[] (cl) at (2.7, -1.6) {$c$};
        \node[] (dl) at (4.4, 3.1) {$d$};
        \node[] (el) at (5.3, -0.4) {$e$};

        \draw[-, thick] (a) -- (b);
        \draw[-, thick] (a) -- (c);
        \draw[-,  thick] (a) -- (d);
        \draw[-, thick] (a) -- (e);
        \draw[-, thick] (b) -- (c);
        \draw[-, ultra thick, purple] (b) -- (d);
        \draw[-, ultra thick, purple] (b) -- (e);
        \draw[-, thick] (c) -- (d);
        \draw[-, thick] (c) -- (e);
        \draw[-, ultra thick, purple] (d) -- (e);
    \end{tikzpicture}
    \caption{}\label{fig:r-sub2}
    \end{subfigure}
    \begin{subfigure}{0.3\textwidth}
        \centering
    \begin{tikzpicture}[scale=0.6]
        \node[draw, circle, blue, inner sep=0.95mm, fill] (a) at (0.3, 0) {};
        \node[draw, circle, blue, inner sep=0.95mm, fill] (b) at (1.5, 2.5) {};
        \node[draw, circle, blue, inner sep=0.95mm, fill] (e) at (5, 0) {};
        \node[draw, circle, blue, inner sep=0.95mm, fill] (d) at (4, 2.7) {};
        \node[draw, circle, blue, inner sep=0.95mm, fill] (c) at (2.7, -1.1 ) {};
        \node[] (al) at (-0.4, -0.4) {$a$};
        \node[] (bl) at (1.1, 2.9) {$b$};
        \node[] (cl) at (2.7, -1.6) {$c$};
        \node[] (dl) at (4.4, 3.1) {$d$};
        \node[] (el) at (5.3, -0.4) {$e$};

        \draw[-, thick] (a) -- (b);
        \draw[-, thick] (a) -- (c);
        \draw[-,  thick] (a) -- (d);
        \draw[-, thick] (a) -- (e);
        \draw[-, thick] (b) -- (c);
        \draw[-, thick] (b) -- (d);
        \draw[-, thick] (b) -- (e);
        \draw[-, ultra thick, purple] (c) -- (d);
        \draw[-, ultra thick, purple] (c) -- (e);
        \draw[-, ultra thick, purple] (d) -- (e);
    \end{tikzpicture}
    \caption{}\label{fig:r-sub3}
    \end{subfigure}

    \begin{subfigure}{0.3\textwidth}
        \centering
    \begin{tikzpicture}[scale=0.6]
        \node[draw, circle, blue, inner sep=0.95mm, fill] (a) at (0.3, 0) {};
        \node[draw, circle, blue, inner sep=0.95mm, fill] (b) at (1.5, 2.5) {};
        \node[draw, circle, blue, inner sep=0.95mm, fill] (e) at (5, 0) {};
        \node[draw, circle, blue, inner sep=0.95mm, fill] (d) at (4, 2.7) {};
        \node[draw, circle, blue, inner sep=0.95mm, fill] (c) at (2.7, -1.1 ) {};
        \node[] (al) at (-0.4, -0.4) {$a$};
        \node[] (bl) at (1.1, 2.9) {$b$};
        \node[] (cl) at (2.7, -1.6) {$c$};
        \node[] (dl) at (4.4, 3.1) {$d$};
        \node[] (el) at (5.3, -0.4) {$e$};

        \draw[-, thick] (a) -- (b);
        \draw[-, ultra thick, purple] (a) -- (c);
        \draw[-, thick] (a) -- (d);
        \draw[-, ultra thick, purple] (a) -- (e);
        \draw[-, thick] (b) -- (c);
        \draw[-, thick] (b) -- (d);
        \draw[-, thick] (b) -- (e);
        \draw[-, thick] (c) -- (d);
        \draw[-, ultra thick, purple] (c) -- (e);
        \draw[-, thick] (d) -- (e);
    \end{tikzpicture}
    \caption{}\label{fig:r-sub4}
    \end{subfigure}
    \begin{subfigure}{0.3\textwidth}
        \centering
        \begin{tikzpicture}[scale=0.6]
            \node[draw, circle, blue, inner sep=0.95mm, fill] (a) at (0.3, 0) {};
            \node[draw, circle, blue, inner sep=0.95mm, fill] (b) at (1.5, 2.5) {};
            \node[draw, circle, blue, inner sep=0.95mm, fill] (e) at (5, 0) {};
            \node[draw, circle, blue, inner sep=0.95mm, fill] (d) at (4, 2.7) {};
            \node[draw, circle, blue, inner sep=0.95mm, fill] (c) at (2.7, -1.1 ) {};
            \node[] (al) at (-0.4, -0.4) {$a$};
            \node[] (bl) at (1.1, 2.9) {$b$};
            \node[] (cl) at (2.7, -1.6) {$c$};
            \node[] (dl) at (4.4, 3.1) {$d$};
            \node[] (el) at (5.3, -0.4) {$e$};
    
            \draw[-, ultra thick, purple] (a) -- (b);
            \draw[-, thick] (a) -- (c);
            \draw[-,  ultra thick, purple] (a) -- (d);
            \draw[-, thick] (a) -- (e);
            \draw[-, thick] (b) -- (c);
            \draw[-, ultra thick, purple] (b) -- (d);
            \draw[-, thick] (b) -- (e);
            \draw[-, thick] (c) -- (d);
            \draw[-, thick] (c) -- (e);
            \draw[-, thick] (d) -- (e);
        \end{tikzpicture}
    \caption{}\label{fig:r-sub5}
    \end{subfigure}

    \caption{Illustration for part of the proof of~\Cref{lemma:gamma-counts}, depicting the five $\triZero$ for which the remaining two points are in the same $\gamma$-region. }\label{fig:proof-gamma-counts-2}
\end{figure}

\begin{figure}
    \begin{subfigure}{0.49\textwidth}
        \centering
    \begin{tikzpicture}[scale=0.75]
        \node[draw, circle, blue, inner sep=0.95mm, fill] (a) at (0, 0) {};
        \node[draw, circle, blue, inner sep=0.95mm, fill] (b) at (1.5, 2.5) {};
        \node[draw, circle, blue, inner sep=0.95mm, fill] (e) at (5, 0) {};
        \node[draw, circle, blue, inner sep=0.95mm, fill] (d) at (4, 2.7) {};
        \node[draw, circle, blue, inner sep=0.95mm, fill] (c) at (2.7, 1.1 ) {};
        \node[] (al) at (-0.4, -0.4) {$a$};
        \node[] (bl) at (1.1, 2.9) {$b$};
        \node[] (cl) at (2.7, 0.7) {$c$};
        \node[] (dl) at (4.3, 3.1) {$d$};
        \node[] (el) at (5.3, -0.4) {$e$};

        \draw[-, ultra thick, purple] (a) -- (b);
        \draw[-, thick] (a) -- (c);
        \draw[-,  ultra thick, purple] (a) -- (d);
        \draw[-, thick] (a) -- (e);
        \draw[-, thick] (b) -- (c);
        \draw[-, ultra thick, purple] (b) -- (d);
        \draw[-, thick] (b) -- (e);
        \draw[-, thick] (c) -- (d);
        \draw[-, thick] (c) -- (e);
        \draw[-, thick] (d) -- (e);
    \end{tikzpicture}
    \caption{}\label{fig:sub1}
    \end{subfigure}
    \begin{subfigure}{0.49\textwidth}
        \centering
        \begin{tikzpicture}[scale=0.75]
            \node[draw, circle, blue, inner sep=0.95mm, fill] (a) at (0, 0) {};
            \node[draw, circle, blue, inner sep=0.95mm, fill] (b) at (1.5, 2.5) {};
            \node[draw, circle, blue, inner sep=0.95mm, fill] (e) at (5, 0) {};
            \node[draw, circle, blue, inner sep=0.95mm, fill] (d) at (4, 2.7) {};
            \node[draw, circle, blue, inner sep=0.95mm, fill] (c) at (2.7, 1.1 ) {};
            \node[] (al) at (-0.4, -0.4) {$a$};
            \node[] (bl) at (1.1, 2.9) {$b$};
            \node[] (cl) at (2.7, 0.7) {$c$};
            \node[] (dl) at (4.3, 3.1) {$d$};
            \node[] (el) at (5.3, -0.4) {$e$};
    
            \draw[-, thick] (a) -- (b);
            \draw[-, thick] (a) -- (c);
            \draw[-, thick] (a) -- (d);
            \draw[-, thick] (a) -- (e);
            \draw[-, thick] (b) -- (c);
            \draw[-, ultra thick, purple] (b) -- (d);
            \draw[-,  ultra thick, purple] (b) -- (e);
            \draw[-, thick] (c) -- (d);
            \draw[-, thick] (c) -- (e);
            \draw[-,  ultra thick, purple] (d) -- (e);
        \end{tikzpicture}
        \caption{}\label{fig:sub2}
        \end{subfigure}
    \caption{Illustration for part of the proof of~\Cref{lemma:gamma-counts}, depicting the two $\triZero$ for which the remaining two points are in the same $\gamma$-region. }\label{fig:proof-gamma-counts}
\end{figure}
\end{proof}

\begin{lemma}\label{lemma:beta-counts}
    For any $n$-point placement $P$ in the plane, we have 
    \[
        \sum_{\triZero \in P} \left(\binom{\beta^{(1)}_{\triZero}}{2} + \binom{\beta^{(2)}_{\triZero}}{2} + \binom{\beta^{(3)}_{\triZero}}{2}\right) = \squareOne(P) + 2\triTwo(P),
    \]
    and 
    \[
        \sum_{\triZero \in P} \left(\beta^{(1)}_{\triZero}\beta^{(2)}_{\triZero} + \beta^{(2)}_{\triZero}\beta^{(3)}_{\triZero} + \beta^{(1)}_{\triZero}\beta^{(3)}_{\triZero}  \right) = \triTwo(P).
    \]
\end{lemma}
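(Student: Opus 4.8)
The plan is to prove both identities by double counting over pairs $(\triZero,\{p,q\})$, where $\triZero\subseteq P$ is a triangle and $p,q\in P$ are two further points, mirroring the proof of \Cref{lemma:gamma-counts} with the $\beta$-regions in place of the $\gamma$-regions. The one tool I need is an algebraic description of the $\beta$-regions: for $\triZero=abc$ and a point $x$, the point $x$ lies in a $\beta$-region of $\triZero$ if and only if one vertex of $\triZero$ — say $a$ — lies in the interior of the triangle formed by $x$ and the other two vertices; this follows by writing the corresponding region (the cone with apex $a$) as $\{a+s(a-b)+t(a-c):s,t>0\}$ and rearranging to $a=\frac{1}{1+s+t}(x+sb+tc)$. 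Call $a$ the \emph{$\beta$-apex} of $x$ with respect to $\triZero$. The consequence that drives everything is that if $p$ and $q$ both lie in $\beta$-regions of $\triZero$, then $\triZero\cup\{p,q\}$ is never a $\pentagon$: if $p,q$ share a $\beta$-apex $a$, then $a$ is interior to triangle $bcp$, hence interior to the convex hull of $\{b,c,p,q\}$, so $\triZero\cup\{p,q\}$ is a $\squareOne$ or a $\triTwo$; and if $p$ has apex $a$ while $q$ has apex $b$, then $a$ is interior to triangle $bcp$ and $b$ interior to triangle $acq$, and substituting one relation into the other shows both $a$ and $b$ are interior to triangle $cpq$, so $\triZero\cup\{p,q\}$ is a $\triTwo$ with hull $cpq$ and interior $\{a,b\}$.

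For the first identity, the left-hand side counts pairs $(\triZero,\{p,q\})$ with $p$ and $q$ in the same $\beta$-region, so by the above it remains only to check the multiplicities: each $\squareOne$ is obtained once and each $\triTwo$ twice. In both cases the shared $\beta$-apex, being a vertex of $\triZero$, must be a point interior to the $5$-point set. For a $\squareOne$ with hull $wxyz$ (cyclic) and interior point $o$, this forces $\triZero=\{o,v_i,v_j\}$ with $v_i,v_j$ two of the four hull vertices; the remaining two hull vertices $v_k,v_l$ both lie in the $\beta$-region at $o$ exactly when $o$ is interior to triangle $v_iv_jv_k$ and to triangle $v_iv_jv_l$. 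I would check this holds precisely when $\{v_i,v_j\}$ are \emph{adjacent} and $o$ lies in the sub-triangle spanned by $v_i$, $v_j$, and the intersection point $m$ of the two diagonals of $wxyz$; the four such sub-triangles tile the quadrilateral, so $o$ lies in exactly one, and the $\squareOne$ is hit once. For a $\triTwo$ with hull $\triangle uvw$ and interior points $o_1,o_2$, the shared apex is $o_1$ or $o_2$; taking it to be $o_1$, either $\triZero=\{o_1,v_i,v_j\}$ — and then the remaining points $v_k$ and $o_2$ both lie in the $\beta$-region at $o_1$ iff $o_1$ is interior to triangle $v_iv_jo_2$ (the condition for $v_k$ being automatic), and since the three triangles $v_iv_jo_2$ obtained by varying the pair tile $\triangle uvw$, exactly one pair works — or $\triZero=\{o_1,o_2,v_i\}$, which is impossible because the remaining two hull vertices would both force $o_1$ onto segment $\overline{o_2v_i}$, against general position. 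So exactly one valid $\triZero$ has $\beta$-apex $o_1$, and by symmetry exactly one has $\beta$-apex $o_2$; the $\triTwo$ is hit twice.

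For the second identity the left-hand side counts pairs with $p,q$ in \emph{different} $\beta$-regions, which by the first paragraph is always a $\triTwo$, so it suffices to show each $\triTwo$ is obtained exactly once. Given a $\triTwo$ with hull $uvw$ and interior $o_1,o_2$, the two $\beta$-apexes must be the two interior points and the hull of the $5$-set must be the triangle on the third vertex of $\triZero$, forcing $\triZero=\{o_1,o_2,c\}$ with $c\in\{u,v,w\}$. Unwinding ``$p$ has apex $o_1$ and $q$ has apex $o_2$'' into ``$o_1$ interior to triangle $o_2cp$ and $o_2$ interior to triangle $o_1cq$'' says the segment $\overline{o_1o_2}$, extended past $o_1$, meets edge $\overline{cp}$, and extended past $o_2$, meets edge $\overline{cq}$; hence the line through $o_1$ and $o_2$ separates $c$ from the other two hull vertices. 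This pins down $c$ uniquely as the hull vertex cut off by the line $o_1o_2$, and a direct check confirms that for that $c$ the configuration is realized. Hence $\sum_{\triZero\in P}\bigl(\beta^{(1)}_{\triZero}\beta^{(2)}_{\triZero}+\beta^{(2)}_{\triZero}\beta^{(3)}_{\triZero}+\beta^{(1)}_{\triZero}\beta^{(3)}_{\triZero}\bigr)=\triTwo(P)$.

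The conceptual content is short — the cone description of the $\beta$-regions and the observation that a shared or a split $\beta$-pair rules out a pentagon. The real work, and the main place I expect to slip, is the multiplicity bookkeeping: confirming that the various families of sub-triangles genuinely tile the relevant hull and that the only degenerate overlaps are the collinear configurations forbidden by general position, so that each $\squareOne$ and each $\triTwo$ is counted with exactly the claimed multiplicity. I would support that part with figures analogous to \Cref{fig:proof-gamma-counts-2,fig:proof-gamma-counts}.
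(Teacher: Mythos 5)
Your proposal is correct and takes essentially the same route as the paper, which proves this lemma by the counting argument analogous to \Cref{lemma:gamma-counts}: classify each $5$-point subset by type and determine for how many of its triangles the remaining two points land in the same (respectively, different) $\beta$-regions, which is exactly your bookkeeping via the apex characterization. The multiplicities you flagged for verification do hold as you describe (each $\squareOne$ once and each $\triTwo$ twice for same-region pairs; each $\triTwo$ exactly once for split pairs, with the relevant sub-triangles tiling the hull and degeneracies excluded by general position), so nothing essential is missing.
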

\begin{proof}
The proof of this lemma is a counting argument analogous to the proof of~\Cref{lemma:gamma-counts}.
\end{proof}

\begin{lemma}\label{lemma:c_4-counts}
For any placement  $P$ of $n$ points in the plane, we have 
\[
(n-4)\squareZero(P) = \frac{(n-4)}{4} \sum_{\triZero \in P} \gamma_{\triZero}  = 5\pentagon(P) + 3\squareOne(P) + \triTwo(P).
\]
\end{lemma}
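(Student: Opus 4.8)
The plan is to establish the two equalities in turn. The left one, $(n-4)\squareZero(P) = \tfrac{n-4}{4}\sum_{\triZero\in P}\gamma_{\triZero}$, is immediate: multiply Equation~\eqref{eq:-gamma-n4} (which states $\sum_{\triZero\in P}\gamma_{\triZero} = 4\,\squareZero(P)$) by $(n-4)/4$.

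For the right equality, $(n-4)\,\squareZero(P) = 5\pentagon(P) + 3\squareOne(P) + \triTwo(P)$, I would double-count the set of pairs $(Q,x)$ with $Q\subseteq P$ a convex quadrilateral and $x\in P\setminus Q$. Counting by $Q$ first gives exactly $(n-4)\,\squareZero(P)$. Counting instead by the $5$-element set $S := Q\cup\{x\}$ gives $\sum_{S} q(S)$, where $S$ ranges over the $5$-element subsets of $P$ and $q(S)$ denotes the number of $4$-subsets of $S$ of type $\squareZero$. It therefore suffices to prove that $q(S) = 5$, $3$, or $1$ according as $S$ has type $\pentagon$, $\squareOne$, or $\triTwo$; summing over these three types (the only ones possible for a $5$-point placement) then yields the identity.

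The type $\pentagon$ case is immediate: deleting any one vertex of a convex pentagon leaves four points in convex position, so $q(S)=\binom{5}{4}=5$. For type $\squareOne$, write $S$ as a convex quadrilateral $abcd$ (vertices in cyclic order) together with an interior point $e$; then $\{a,b,c,d\}$ is convex, and for each of the four subsets containing $e$ I would use that $a,b,c,d$ are extreme points of $S$, so such a subset is convex precisely when $e$ lies outside the triangle formed by its three other points. Since diagonal $ac$ splits $abcd$ into triangles $abc$ and $acd$, and diagonal $bd$ splits it into $abd$ and $bcd$, the point $e$ lies in exactly one triangle of each pair; a direct check then shows exactly two of the four $e$-containing subsets are convex, giving $q(S)=3$.

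The type $\triTwo$ case is the one I expect to be the main obstacle. Write $S$ as a triangle $abc$ with interior points $p,q$. The subsets $\{a,b,c,p\}$ and $\{a,b,c,q\}$ have type $\triOne$, so the only candidates are the three ``mixed'' subsets, each consisting of two triangle-vertices together with $\{p,q\}$. The key geometric fact is that the line $\ell$ through $p$ and $q$ meets $\partial(abc)$ in exactly two edges (it passes through no vertex, as no three points are collinear), and those two edges share a common vertex $v$; hence $\ell$ separates $v$ from the remaining vertices $u,w$, which lie strictly on one side of $\ell$. I would then argue: (i) $\{u,w,p,q\}$ is in convex position, since $pq$ lies on the supporting line $\ell$ and neither $u$ nor $w$ can lie in the triangle spanned by $p,q$ and the other of $u,w$ (such a triangle is contained in $abc$ and meets $\partial(abc)$ only at a single vertex, so cannot contain a second one); and (ii) each of $\{v,u,p,q\}$ and $\{v,w,p,q\}$ has type $\triOne$, because $\ell$ crosses the edge joining $v$ to $u$ (respectively $w$) at a point lying strictly outside segment $pq$ --- as $p,q$ are interior to $abc$ and hence strictly between the two crossings of $\ell$ with $\partial(abc)$ --- which forces one of $p,q$ into the interior of the triangle formed by $v$, the endpoint $u$ (resp.\ $w$), and the other of $p,q$. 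Hence $q(S)=1$ for type $\triTwo$, and collecting the three cases completes the count.
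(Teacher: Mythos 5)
Your proof is correct and takes essentially the same route as the paper: the first equality comes from $\sum_{\triZero\in P}\gamma_{\triZero}=4\,\squareZero(P)$, and the second from double-counting pairs consisting of a convex quadrilateral and a fifth point, partitioning the resulting $5$-sets by type with per-type counts $5$, $3$, $1$. The only difference is that you verify these counts in detail (the paper simply asserts them), and your verifications, including the more delicate $\triTwo$ case, are sound.
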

\begin{proof}
The first equality follows from~\Cref{eq:-gamma-n4}.
The second equality requires a slight twist on the previous ideas. It is easier to prove that
\[ 
    (n-4)\squareZero(P) =  5\pentagon(P) + 3\squareOne(P) + \triTwo(P),
\]
which would complete the proof by transitivity. Indeed, we can show that both sides count the number of pairs $(\squareZero, S)$ where $S$ is a $5$-point subset of $P$ and $\squareZero \subset S$. The left-hand-side is obtained by noting that each $\squareZero$ has exactly $n-4 = |P \setminus \squareZero|$ choices for $S$. The right-hand-side partitions $S$ according to its type: every $\pentagon$ has $5$ subsets that are convex quadrilaterals, whereas every $\squareOne$ has only $3$, and every $\triTwo$ has only one. 
\end{proof}

\section{Deriving the Lower Bound}\label{sec:lower_bound}

We have deduced a variety of ``planar-point equations'', and it is now time to put them to good use. We now show how the previous equations can be combined by viewing them from a statistical perspective.  In a nutshell, we will isolate $\pentagon(P)$ in terms of the $\gamma_{\triZero}$ variables, and then minimize over the $\gamma_{\triZero}$ variables to obtain a lower bound on $\pentagon(P)$.

Before we proceed to prove a short sequence of lemmas that will yield the desired lower bound, let us introduce an algebraic trick that will greatly simplify our lives. Remember that our goal,~\Cref{thm:main}, is to prove a result that holds for a sufficiently large $n$, and ultimately to provide a bound on the limit 
\(
    0 < c_5 = \lim_{n\to \infty} \frac{\mu_5(n)}{\binom{n}{5}}.
\) In other words, we know $\mu_5(n) = \Theta(n^5)$, and our job is to find the constant hidden in the asymptotic notation $\Theta(\cdot)$. To this end, lower-order terms (e.g., $3n^4 + 7n^3 - 2n + 42$) can be safely ignored, as they will not affect the sought constant that determines $c_5$.
A concrete way to proceed according to this idea is through a minor extension of Landau notation for asymptotic equivalence: 
\[
    f(n) \sim g(n) \; \iff \; \lim_{n\to \infty} \frac{f(n)}{g(n)} = 1,
\]
which can be extended to inequalities as follows:
\[
    f(n) \lesssim g(n)  \iff  \lim_{n\to \infty} \frac{f(n)}{g(n)} \leq 1 \quad \text{ and } \quad  f(n) \gtrsim g(n)  \iff g(n) \lesssim f(n).
\]
Note that our definitions of $\lesssim$ and $\gtrsim$ are clearly transitive.
%
Furthermore, the key useful property in our use-case is that we can use this notation to absorb lower-order terms without ignoring the constant of the highest-order term. For instance,
\[ 
    3n^2 + 2n \sim 3n^2 \lesssim 3n^2 - 10 n \lesssim 4n^{3} \lesssim 5n^3 \gtrsim 2n^2,
\]
whereas  $n^2 \not\lesssim 5n \not\lesssim 4n$. A concrete case we will use repeatedly is that, for a constant $k$, we have 
\(
    \binom{n}{k} \sim \frac{n^k}{k!}.
\)
With this notation, we are finally ready to present and prove the series of lemmas that will achieve~\Cref{thm:main}.
\begin{lemma}\label{lemma:sigma_gamma}
    For any placement  $P$ of $n$ points in the plane, we have 
    \[
        32 \pentagon(P) \sim 4 \sum_{\triZero \in P} \gamma_{\triZero}^2  - 3n \sum_{\triZero \in P} \gamma_{\triZero}  + \frac{n^5}{10}. 
    \]
\end{lemma}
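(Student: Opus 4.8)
The plan is to isolate $\pentagon(P)$ from the quadratic quantity $\sum_{\triZero\in P}\gamma_{\triZero}^2$, by first expanding the square into the symmetric functions of the $\gamma^{(i)}_{\triZero}$ that are controlled by \Cref{lemma:gamma-counts}, and then eliminating the auxiliary counts $\squareOne(P)$ and $\triTwo(P)$ using \Cref{lemma:c_4-counts} and \eqref{eq:sum-5}.

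First I would expand the square. Since $\gamma_{\triZero} = \gamma^{(1)}_{\triZero}+\gamma^{(2)}_{\triZero}+\gamma^{(3)}_{\triZero}$, we have $\gamma_{\triZero}^2 = \sum_i(\gamma^{(i)}_{\triZero})^2 + 2\sum_{i<j}\gamma^{(i)}_{\triZero}\gamma^{(j)}_{\triZero}$, and rewriting $(\gamma^{(i)}_{\triZero})^2 = 2\binom{\gamma^{(i)}_{\triZero}}{2}+\gamma^{(i)}_{\triZero}$ turns $\sum_{\triZero}\gamma_{\triZero}^2$ into a fixed combination of exactly the two sums appearing in \Cref{lemma:gamma-counts} together with $\sum_{\triZero}\gamma_{\triZero}$. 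Substituting those two identities should collapse the right-hand side to $20\pentagon(P) + 6\squareOne(P) + \sum_{\triZero}\gamma_{\triZero}$.

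The next step is to remove $\squareOne(P)$. For this I would combine \Cref{lemma:c_4-counts} (which gives $\tfrac{n-4}{4}\sum_{\triZero}\gamma_{\triZero} = 5\pentagon(P)+3\squareOne(P)+\triTwo(P)$) with the trivial partition $\pentagon(P)+\squareOne(P)+\triTwo(P) = \binom{n}{5}$ from \eqref{eq:sum-5}; subtracting eliminates $\triTwo(P)$ and expresses $\squareOne(P)$ linearly in $\sum_{\triZero}\gamma_{\triZero}$, $\pentagon(P)$ and $\binom{n}{5}$. Plugging that back into the expression from the previous step leaves a single linear equation in $\pentagon(P)$, which I expect to solve to the exact identity
\[
32\pentagon(P) = 4\sum_{\triZero\in P}\gamma_{\triZero}^2 - (3n-8)\sum_{\triZero\in P}\gamma_{\triZero} + 12\binom{n}{5}.
\]
To finish, I would pass to the asymptotic statement: by \Cref{lemma:ppe-beta-gamma-n4} we have $\sum_{\triZero}\gamma_{\triZero} \le 4\binom{n}{4} = O(n^4)$, so the correction term $8\sum_{\triZero}\gamma_{\triZero}$ and the difference $12\binom{n}{5} - \tfrac{n^5}{10}$ are both $O(n^4)$, hence of lower order than the $\Theta(n^5)$ size of $\pentagon(P)$ (which lies between $\mu_5(n)$ and $\binom{n}{5}$, both $\Theta(n^5)$); the exact identity then becomes the asserted $\sim$.

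I expect the only friction to be careful bookkeeping of coefficients — in particular verifying the cancellation that leaves $\pentagon(P)$ with coefficient $8$ (i.e. $20-12=8$) and the identity $\tfrac{3(n-4)}{4}+1 = \tfrac{3n-8}{4}$ — together with confirming that the absorbed error terms are uniformly $o(n^5)$ over all placements, which is exactly what the a priori bound from \Cref{lemma:ppe-beta-gamma-n4} supplies. I do not anticipate any conceptual obstacle beyond this algebra.
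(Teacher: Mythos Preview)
Your proposal is correct and follows essentially the same route as the paper: both arguments combine the two identities of \Cref{lemma:gamma-counts} to express $\sum_{\triZero}\gamma_{\triZero}^2$ as $20\pentagon(P)+6\squareOne(P)+\sum_{\triZero}\gamma_{\triZero}$, then eliminate $\squareOne(P)$ and $\triTwo(P)$ via \Cref{lemma:c_4-counts} and \eqref{eq:sum-5} to reach the exact identity $32\pentagon(P)=4\sum\gamma_{\triZero}^2-(3n-8)\sum\gamma_{\triZero}+12\binom{n}{5}$ before passing to $\sim$. Your version is in fact slightly tidier in that you make the exact identity explicit and justify the $o(n^5)$ absorption via the bound $\sum_{\triZero}\gamma_{\triZero}\le 4\binom{n}{4}$ from \Cref{lemma:ppe-beta-gamma-n4}.
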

\begin{proof}
    We start by considering $8$ times the sum of both equations in~\Cref{lemma:gamma-counts}, which results in 
    \[ 
        8\sum_{\triZero \in P} \left(\binom{\gamma^{(1)}_{\triZero}}{2} + \binom{\gamma^{(2)}_{\triZero}}{2} + \binom{\gamma^{(3)}_{\triZero}}{2} + \gamma^{(1)}_{\triZero}\gamma^{(2)}_{\triZero} + \gamma^{(2)}_{\triZero}\gamma^{(3)}_{\triZero} + \gamma^{(1)}_{\triZero}\gamma^{(3)}_{\triZero} \right)  = 80\pentagon(P) + 24\squareOne(P).
    \]
    From the previous equation, we have 
    \[ 
        80\pentagon(P) + 24\squareOne(P) = 4\sum_{\triZero \in P} \gamma_{\triZero}^2 - 4\sum_{\triZero \in P} \gamma_{\triZero} \sim 4\sum_{\triZero \in P} \gamma_{\triZero}^2,
    \]
    Then, subtracting 12 times the second equality of~\Cref{lemma:c_4-counts} from the previous equation, we obtain
    \[ 
        20\pentagon(P) - 12\squareOne(P) - 12\triTwo(P) \sim 4\sum_{\triZero \in P} \gamma_{\triZero}^2 - (3n-8)\sum_{\triZero \in P} \gamma_{\triZero},
    \]
    and the lemma follows by adding $12$ times~\Cref{eq:sum-5}.
\end{proof}

Let us now define $\bbeta$ and $\bgamma$ as the average of $\beta_{\triZero}$ and $\gamma_{\triZero}$, respectively, over all $\triZero \in P$. That is,
\[
    \bbeta = \frac{1}{\triZero(P)} \sum_{\triZero \in P} \beta_{\triZero} \quad \text{and} \quad \bgamma = \frac{1}{\triZero(P)} \sum_{\triZero \in P} \gamma_{\triZero}.
\]

Similarly, we define 
\[ 
    \sigma_\gamma^2 = \left(\frac{1}{\triZero(P)} \sum_{\triZero \in P} \gamma_{\triZero}^2 \right)- \bgamma^2 \quad \text{and} \quad \sigma_\beta^2 =  \left(\frac{1}{\triZero(P)} \sum_{\triZero \in P} \beta_{\triZero}^2\right)- \bbeta^2.
\]
Note that $\gamma_{\triZero}^2 = \gamma_{\triZero} \cdot \gamma_{\triZero}$, which is not to be confused with $\gamma^{(2)}_{\triZero}$, the number of points in the $\gamma^{(2)}$-region. Also, we remark that the amounts $\bbeta$, $\bgamma$, $\sigma_\gamma^2$, and $\sigma_\beta^2$ are functions of the point placement $P$, but we omit an explicit reference to $P$ in order to simplify our notation, as $P$ is fixed in all our proof.
\begin{lemma}\label{lemma:960-2}
For any point placement $P$ of $n$ points in the plane, we have
\[ 
    960 \pentagon(P) \sim n^3 \left(20\sigma_\gamma^2 + 20\bgamma^2 - 15n\bgamma + 3n^2\right).
\]
\end{lemma}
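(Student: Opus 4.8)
The plan is to translate \Cref{lemma:sigma_gamma} into the language of the averages $\bgamma$ and $\sigma_\gamma^2$, and then to drop every term of order below $n^5$. First I would record that, since the points of $P$ are in general position, every triple spans a triangle, so $\triZero(P)=\binom{n}{3}$; the definitions of $\bgamma$ and $\sigma_\gamma^2$ then give immediately
\[
  \sum_{\triZero\in P}\gamma_{\triZero}=\binom{n}{3}\,\bgamma,\qquad
  \sum_{\triZero\in P}\gamma_{\triZero}^2=\binom{n}{3}\!\left(\sigma_\gamma^2+\bgamma^2\right).
\]
Multiplying \Cref{lemma:sigma_gamma} through by $30$ and substituting these two identities turns it into
\[
  960\,\pentagon(P)\ \sim\ 120\binom{n}{3}\!\left(\sigma_\gamma^2+\bgamma^2\right)-90n\binom{n}{3}\,\bgamma+3n^5 .
\]

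Next I would pass to the leading order of $\binom{n}{3}$. From $6\binom{n}{3}=n^3-3n^2+2n$ we get $120\binom{n}{3}=20n^3+O(n^2)$ and $90n\binom{n}{3}=15n^4+O(n^3)$, so replacing $\binom{n}{3}$ by $n^3/6$ in the display above changes the right-hand side by $O(n^2)\!\left(\sigma_\gamma^2+\bgamma^2\right)+O(n^3)\,\bgamma$. Using the crude bounds $\bgamma\le n-3$ and $\sigma_\gamma^2\le (n-3)^2$, which hold since each $\gamma_{\triZero}$ counts a subset of the $n-3$ points lying outside the triangle, this change is only $O(n^4)$. Since the right-hand side of the last display is $\sim 960\,\pentagon(P)\ge 960\,\mu_5(n)=\Theta(n^5)$ — the positivity $c_5>0$ being a consequence of the Erd\H{o}s--Szekeres theorem via supersaturation (\Cref{lemma:folklore}) — an $O(n^4)$ perturbation is absorbed by $\sim$, and we are left with
\[
  960\,\pentagon(P)\ \sim\ 20n^3\!\left(\sigma_\gamma^2+\bgamma^2\right)-15n^4\,\bgamma+3n^5
  \ =\ n^3\!\left(20\sigma_\gamma^2+20\bgamma^2-15n\bgamma+3n^2\right),
\]
which is the claimed statement.

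The argument is short, and the one spot that needs care — and what I would flag as the crux — is the last step: substituting the asymptotic value $n^3/6$ for $\binom{n}{3}$ \emph{inside} a product with the $n$-dependent quantities $\sigma_\gamma^2$ and $\bgamma$ is only legitimate because those quantities are polynomially bounded and because $\pentagon(P)=\Theta(n^5)$, so that the $O(n^4)$ error is genuinely negligible. With those two observations in place everything else is routine bookkeeping, and no idea beyond \Cref{lemma:sigma_gamma} is needed.
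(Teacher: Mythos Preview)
Your proof is correct and follows essentially the same route as the paper: multiply \Cref{lemma:sigma_gamma} by $30$, rewrite the two sums via $\sum\gamma_{\triZero}=\binom{n}{3}\bgamma$ and $\sum\gamma_{\triZero}^2=\binom{n}{3}(\sigma_\gamma^2+\bgamma^2)$, and then replace $\binom{n}{3}$ by $n^3/6$. If anything, you are more explicit than the paper in justifying that last substitution, spelling out the $O(n^4)$ error bound and invoking $\pentagon(P)=\Theta(n^5)$ to absorb it.
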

\begin{proof}
We start by multiplying~\Cref{lemma:sigma_gamma} by $30$, from where we have
\[
    960\pentagon(P) \sim 120 \sum_{\triZero \in P} \gamma_{\triZero}^2  - 90n \sum_{\triZero \in P} \gamma_{\triZero}  + 3n^5.
\]
From the definitions of $\bgamma$ and $\sigma_\gamma^2$, we have
\[ 
    \sum_{\triZero \in P} \gamma_{\triZero} = \binom{n}{3} \bgamma \sim \frac{n^3}{6} \bgamma,
\]
as well as 
\[
    \sum_{\triZero \in P} \gamma_{\triZero}^2 = \binom{n}{3} \left(\sigma_\gamma^2 + \bgamma^2\right) \sim\frac{n^3}{6} \left(\sigma_\gamma^2 + \bgamma^2\right).
\]
Combining these three equations we obtain 
\begin{equation*}
    960\pentagon(P) \sim 120 \cdot \frac{n^3}{6} \left(\sigma_\gamma^2 + \bgamma^2\right) - 90n \cdot \frac{n^3}{6} \bgamma + 3n^{5}
\end{equation*}
from where the lemma follows.
\end{proof}

\begin{lemma}\label{lemma:sigma-beta-counts}
    For any point placement $P$ of $n$ points in the plane, we have
    \[ 
        960 \pentagon(P) \sim n^3 \left(80\sigma_\beta^2 + 45\bgamma^2 - 50n\bgamma + 13n^2\right).
    \]
\end{lemma}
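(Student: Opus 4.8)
The plan is to mirror the derivation of \Cref{lemma:960-2} verbatim, but feeding in the $\beta$-counting identities of \Cref{lemma:beta-counts} wherever that argument used the $\gamma$-identities of \Cref{lemma:gamma-counts} (there is no stated $\beta$-analogue of \Cref{lemma:sigma_gamma}, so I would do the whole reduction in one pass). First I would add the two equations of \Cref{lemma:beta-counts}; since $\binom a2+\binom b2+\binom c2+ab+bc+ca=\binom{a+b+c}{2}$, the left-hand side collapses to $\sum_{\triZero \in P}\binom{\beta_{\triZero}}{2}$, giving $\sum_{\triZero\in P}\beta_{\triZero}^2 = 2\squareOne(P)+6\triTwo(P)+\sum_{\triZero\in P}\beta_{\triZero}$. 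By \Cref{eq:-beta-n4} the trailing sum equals $3\triOne(P)=O(n^4)$, hence is absorbed by $\sim$. Writing $\sum_{\triZero\in P}\beta_{\triZero}^2=\binom n3(\sigma_\beta^2+\bbeta^2)\sim\tfrac{n^3}{6}(\sigma_\beta^2+\bbeta^2)$ then turns this into
\[
  2\squareOne(P)+6\triTwo(P)\;\sim\;\tfrac{n^3}{6}\bigl(\sigma_\beta^2+\bbeta^2\bigr).
\]

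Next I would close a $3\times3$ linear system in the unknowns $\pentagon(P),\squareOne(P),\triTwo(P)$ using the identity just obtained together with \Cref{eq:sum-5} and \Cref{lemma:c_4-counts}; for the latter I would record, via \Cref{eq:-gamma-n4}, that $\squareZero(P)=\tfrac14\sum_{\triZero\in P}\gamma_{\triZero}=\tfrac14\binom n3\bgamma$, so that $5\pentagon(P)+3\squareOne(P)+\triTwo(P)=(n-4)\squareZero(P)\sim\tfrac{n^4}{24}\bgamma$. Solving this system for $\pentagon(P)$ and scaling by a convenient constant gives the waypoint
\[
  960\,\pentagon(P)\;\sim\;80\,n^3\bigl(\sigma_\beta^2+\bbeta^2\bigr)+40\,n^4\bgamma-32\,n^5 .
\]

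The last step — the only one that is not pure bookkeeping — is to eliminate $\bbeta$ in favour of $\bgamma$. For this I would divide the $n=4$ planar-point equation of \Cref{lemma:ppe-beta-gamma-n4} by $\triZero(P)=\binom n3$, obtaining $4\bbeta+3\bgamma = 12\binom n4/\binom n3 = 3(n-3)$, whence $\bbeta=\tfrac34(n-3-\bgamma)$ and $\bbeta^2=\tfrac{9}{16}(n-3-\bgamma)^2$, so $80n^3\bbeta^2=45n^3(n-3-\bgamma)^2$. Expanding $(n-3-\bgamma)^2=(n-\bgamma)^2-6(n-\bgamma)+9$ (the last two pieces contribute only degree-$\le4$ terms after multiplication by $n^3$, hence vanish under $\sim$) and collecting the coefficients of $n^5$, $n^4\bgamma$ and $n^3\bgamma^2$ yields $45n^5-90n^4\bgamma+45n^3\bgamma^2$ from the $\bbeta^2$ term, which combined with $40n^4\bgamma-32n^5$ gives exactly $n^3\bigl(80\sigma_\beta^2+45\bgamma^2-50n\bgamma+13n^2\bigr)$, as claimed. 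The main obstacle is therefore purely arithmetical: tracking which $O(n^4)$ remainders (from $\sum_{\triZero\in P}\beta_{\triZero}$, from replacing $\binom nk$ by $n^k/k!$, and from the $-6(n-\bgamma)+9$ correction) are genuinely lower-order, and keeping the several rational coefficients consistent through the elimination. Conceptually the only point to notice is that the $\beta$-route unavoidably reintroduces $\bgamma$ through \Cref{lemma:c_4-counts}, which is precisely why the $n=4$ equation \Cref{lemma:ppe-beta-gamma-n4} must be invoked to bring everything back to the single statistic $\bgamma$.
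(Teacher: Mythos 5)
Your proposal is correct and is essentially the paper's own argument: it uses exactly the same ingredients --- \Cref{lemma:beta-counts}, \Cref{eq:sum-5}, \Cref{lemma:c_4-counts}, and \Cref{lemma:ppe-beta-gamma-n4} together with the decomposition $\sum_{\triZero\in P}\beta_{\triZero}^2=\binom{n}{3}\bigl(\sigma_\beta^2+\bbeta^2\bigr)$ --- differing only in bookkeeping (an explicit $3\times 3$ solve for $\pentagon(P)$ and a single late elimination of $\bbeta$, versus the paper's sequence of scaled additions and subtractions), and your coefficients check out, landing on the same formula. Discarding $\sum_{\triZero\in P}\beta_{\triZero}=3\,\triOne(P)=O(n^4)$ early is harmless for the same reason the paper invokes when dropping $2n^3(n-\bgamma)$: the final left-hand side $960\,\pentagon(P)$ is $\Theta(n^5)$, so additive $O(n^4)$ errors disappear under $\sim$.
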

\begin{proof}
Start by considering $32$ times the sum of both equations in~\Cref{lemma:beta-counts}, which yields 
\[ 
    32 \sum_{\triZero \in P} \left(\binom{\beta^{(1)}_{\triZero}}{2} + \binom{\beta^{(2)}_{\triZero}}{2} + \binom{\beta^{(3)}_{\triZero}}{2} + \beta^{(1)}_{\triZero}\beta^{(2)}_{\triZero} + \beta^{(2)}_{\triZero}\beta^{(3)}_{\triZero} + \beta^{(1)}_{\triZero}\beta^{(3)}_{\triZero} \right)  = 32\squareOne(P) + 96\triTwo(P),
\]
from where 
\[ 
    16\sum_{\triZero \in P} \beta^2_{\triZero} - 16\sum_{\triZero \in P} \beta_{\triZero} = 32\squareOne(P) + 96\triTwo(P), 
\]
and thus 
\begin{equation}\label{eq:beta-gamma-n4}
    16\sum_{\triZero \in P} \beta^2_{\triZero} - \frac{16}{6}n^3\bbeta \sim 32\squareOne(P) + 96\triTwo(P).
\end{equation}

Dividing~\Cref{lemma:ppe-beta-gamma-n4} by $\triZero(P) = \binom{n}{3}$, we obtain
\[
    4\bbeta + 3\bgamma = 12\binom{n}{4}/\binom{n}{3} = \frac{12}{4}(n-3) \sim 3n,
\]
from where we have
\begin{equation}\label{eq:beta-gamma-n}
    n  \sim \bgamma + \frac{4}{3}\bbeta.
\end{equation}

By substituting $\bbeta$ according to~\Cref{eq:beta-gamma-n} in~\Cref{eq:beta-gamma-n4}, we have 
\begin{align*}
    32\squareOne(P) + 96\triTwo(P) &\sim 16\sum_{\triZero \in P} \beta^2_{\triZero} - \frac{16}{6}n^3\bbeta\\
    &\sim 16\sum_{\triZero \in P} \beta^2_{\triZero} - 2n^3(n - \bgamma).
\end{align*}
To the previous equation we add 32 times the second equality of~\Cref{lemma:c_4-counts}, obtaining 
\begin{align}
    160\pentagon(P) + 128\squareOne(P) + 128\triTwo(P) &\sim 16\sum_{\triZero \in P} \beta^2_{\triZero} - 2n^3(n - \bgamma) + \frac{32n^4}{24}\bgamma \label{eq:sum-8-3-1}\\
    &\sim 16\sum_{\triZero \in P} \beta^2_{\triZero}  + \frac{4}{3}n^4  \bgamma \label{eq:sum-8-3-2}\\
        &\sim \frac{16}{6}n^3(\sigma_\beta^2 + \bbeta^2) + \frac{4}{3}n^4\bgamma\\
        &\sim \frac{8}{3}n^3\left(\sigma_\beta^2 + \left(\frac{3}{4}n - \frac{3}{4}\bgamma\right)^2\right) + \frac{4}{3}n^4\bgamma\\
        &\sim \frac{8}{3}n^3\sigma_\beta^2 + \frac{3}{2}n^5- \frac{5}{3}n^4\bgamma +\frac{3}{2}n^3\bgamma^2. \label{eq:sum-8-3}
\end{align}
    Note that going from~\Cref{eq:sum-8-3-1} to~\Cref{eq:sum-8-3-2} we got rid of the term $2n^3(n - \bgamma)$, since $2n^3(n - \bgamma) \in O(n^4)$, whereas the left-hand-side is $\Theta(n^5)$ since already $\pentagon(P)$ is $\Theta(n^5)$.
    Then, to~\Cref{eq:sum-8-3} we subtract $128$ times~\Cref{eq:sum-5}, thus obtaining 
    \begin{align*}
    32\pentagon(P) &\sim \frac{8}{3}n^3\sigma_\beta^2 + \frac{3}{2}n^5 -\frac{128}{120}n^5- 3n^4\bgamma +\frac{3}{2}n^3\bgamma^2\\
        &\sim \frac{8}{3}n^3\sigma_\beta^2 + \frac{13}{30}n^5- \frac{5}{3} n^4\bgamma +\frac{3}{2}n^3\bgamma^2,
    \end{align*}
    from where the lemma follows after multiplying by $30$.
\end{proof}
We will need as well the following lemma, whose proof uses a statistical inequality.
\begin{lemma}\label{eq:beta-gamma-count}
    For every point placement $P$ we have 
    \[
        960 \pentagon(P) \gtrsim n^3 \left(-30n\bgamma + 30\bgamma^2 -40\sigma_\gamma\sigma_\beta+ 8n^2\right). 
    \]
\end{lemma}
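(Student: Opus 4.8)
The plan is to combine Lemma~\ref{lemma:beta-gamma-count}, which isolates $\pentagon(P)$ in terms of $\sum_{\triZero} \beta_{\triZero}\gamma_{\triZero}$, with a statistical bound on the covariance-type term $\sum_{\triZero}\beta_{\triZero}\gamma_{\triZero}$, and then feed in the already-established averages $\bbeta$, $\bgamma$ and variances $\sigma_\beta^2$, $\sigma_\gamma^2$.

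First I would rewrite Lemma~\ref{lemma:beta-gamma-count} in the asymptotic notation: dividing the relation $4\pentagon(P) = 4\binom{n}{5} - \sum_{\triZero \in P}\beta_{\triZero}\gamma_{\triZero}$ and using $\binom{n}{5}\sim n^5/120$ gives
\[
    4\pentagon(P) \sim \frac{n^5}{30} - \sum_{\triZero\in P}\beta_{\triZero}\gamma_{\triZero}.
\]
Next I would bound the sum $\sum_{\triZero}\beta_{\triZero}\gamma_{\triZero}$ from above. Writing it statistically, $\frac{1}{\triZero(P)}\sum_{\triZero}\beta_{\triZero}\gamma_{\triZero} = \Ex[\beta_{\triZero}\gamma_{\triZero}]$ over a uniformly random triangle, and by Cauchy--Schwarz (the statistical inequality alluded to in the lemma's preamble),
\[
    \Cov(\beta_{\triZero},\gamma_{\triZero}) = \Ex[\beta_{\triZero}\gamma_{\triZero}] - \bbeta\,\bgamma \le \sigma_\beta \sigma_\gamma,
\]
so $\sum_{\triZero}\beta_{\triZero}\gamma_{\triZero} \le \binom{n}{3}\bigl(\bbeta\bgamma + \sigma_\beta\sigma_\gamma\bigr) \sim \frac{n^3}{6}\bigl(\bbeta\bgamma + \sigma_\beta\sigma_\gamma\bigr)$. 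Plugging this into the displayed asymptotic identity and flipping the inequality direction (since we subtract an upper bound) yields
\[
    4\pentagon(P) \gtrsim \frac{n^5}{30} - \frac{n^3}{6}\bbeta\bgamma - \frac{n^3}{6}\sigma_\beta\sigma_\gamma.
\]
Finally I would eliminate $\bbeta$ via~\Cref{eq:beta-gamma-n}, namely $\bbeta \sim \frac34(n - \bgamma)$, so that $\bbeta\bgamma \sim \frac34 n\bgamma - \frac34\bgamma^2$; substituting and multiplying through by $240$ gives
\[
    960\pentagon(P) \gtrsim 8n^5 - 30 n^4\bgamma + 30 n^3\bgamma^2 - 40 n^3 \sigma_\beta\sigma_\gamma = n^3\bigl(-30n\bgamma + 30\bgamma^2 - 40\sigma_\gamma\sigma_\beta + 8n^2\bigr),
\]
which is exactly the claimed bound. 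The only genuinely non-mechanical step is the application of Cauchy--Schwarz to get $\Cov(\beta_{\triZero},\gamma_{\triZero}) \le \sigma_\beta\sigma_\gamma$; everything else is bookkeeping with the $\sim$/$\gtrsim$ calculus and the previously derived relations for $\sum\gamma_{\triZero}$, $\sum\beta_{\triZero}$, and their squares. I would be careful that the lower-order terms discarded (e.g. the $-\bbeta\bgamma$ cross terms of order $n^4$ versus the $n^5$ main term, and the $\pm$ corrections from $\binom{n}{3}$ versus $n^3/6$) are indeed asymptotically negligible, which holds because $\pentagon(P) = \Theta(n^5)$ forces $\bgamma = \Theta(n)$ and $\sigma_\beta\sigma_\gamma = O(n^2)$.
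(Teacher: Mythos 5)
Your proof is correct and follows essentially the same route as the paper: rewrite Lemma~\ref{lemma:beta-gamma-count} asymptotically, bound $\sum_{\triZero}\beta_{\triZero}\gamma_{\triZero}$ by $\binom{n}{3}(\bbeta\bgamma+\sigma_\beta\sigma_\gamma)$ via the covariance (Cauchy--Schwarz) inequality, eliminate $\bbeta$ using \Cref{eq:beta-gamma-n}, and rescale. The only difference is cosmetic (you multiply by $240$ at the end, while the paper multiplies by $8$ first and by $30$ at the end).
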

\begin{proof}
Multiplying~\Cref{lemma:beta-gamma-count} by $8$, we obtain 
\begin{align*}
    32\pentagon(P) &= 32 \binom{n}{5} - 8\sum_{\triZero \in P} \beta_{\triZero} \gamma_{\triZero} \\
            &\sim \frac{4n^5}{15} -  8\sum_{\triZero \in P} \beta_{\triZero} \gamma_{\triZero}.
\end{align*}
Now, let us study the ``covariance'' of $\beta_{\triZero}$ and $\gamma_{\triZero}$, defined as 
\[
    \Cov({\beta_{\triZero}, \gamma_{\triZero}}) = \frac{\sum_{\triZero \in P} (\beta_{\triZero} - \bbeta)(\gamma_{\triZero} - \bgamma)}{\triZero(P)} = \frac{\sum_{\triZero \in P} (\beta_{\triZero} - \bbeta)(\gamma_{\triZero} - \bgamma)}{\binom{n}{3}}.
\]
From the previous equation, we have
\[
   \binom{n}{3} \Cov({\beta_{\triZero}, \gamma_{\triZero}}) = \sum_{\triZero \in P} \beta_{\triZero} \gamma_{\triZero} - \beta_{\triZero} \bgamma - \gamma_{\triZero} \bbeta  + \bbeta \bgamma.
\]
By the covariance inequality (which is a direct consequence of Cauchy-Schwarz), we have
\[
    \Cov({\beta_{\triZero}, \gamma_{\triZero}}) \leq \sigma_\beta \sigma_\gamma,
\]
which together with the previous equation gives
\begin{align*}
    \sum_{\triZero \in P} \beta_{\triZero} \gamma_{\triZero} &\leq \binom{n}{3} \left(\sigma_\beta \sigma_\gamma \right) + \sum_{\triZero \in P}\beta_{\triZero} \bgamma + \gamma_{\triZero} \bbeta  - \bbeta\bgamma\\
    &= \binom{n}{3} \left(\sigma_\beta \sigma_\gamma  + \bgamma \bbeta\right). 
\end{align*}
We thus have the following chain of inequalities:
\begin{align*}
    32\pentagon(P) &\sim \frac{4n^5}{15} -  8\sum_{\triZero \in P} \beta_{\triZero} \gamma_{\triZero}\\
    &\geq \frac{4n^5}{15} - 8\binom{n}{3} \left(\sigma_\beta \sigma_\gamma  + \bgamma \bbeta\right)\\
    &\sim \frac{4n^5}{15} - \frac{4n^3}{3} \sigma_\gamma \sigma_\beta - \frac{4n^3}{3} \bgamma \bbeta\\
    &\sim \frac{4n^5}{15} - \frac{4n^3}{3} \sigma_\gamma \sigma_\beta - n^3 \bgamma \left(n - \bgamma\right) \tag{\Cref{eq:beta-gamma-n}},\\
    &\sim \frac{4n^5}{15} - \frac{4n^3}{3} \sigma_\gamma \sigma_\beta - n^4\bgamma + n^3\bgamma^2,
\end{align*}
from where the lemma follows after multiplying by $30$ once again.
\end{proof}

We are now ready to prove the key lemma, from which the final result will be derived.

\begin{lemma}\label{lemma:960}
Let $x_P := 960 \pentagon(P) / n^3$. Then 
\[
    x_P \gtrsim \frac{n\left(25 \bgamma^2 -22n\bgamma + 5n^2\right)}{\bgamma} \geq (10\sqrt{5}-22)n^2.    
\]
\end{lemma}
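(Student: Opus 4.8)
The plan is to combine \Cref{lemma:960-2}, \Cref{lemma:sigma-beta-counts} and \Cref{lemma:beta-gamma-count} into a single quadratic inequality in $x_P$ via one application of Cauchy--Schwarz, and then read off the bound by elementary algebra. Abbreviate $Q_1 := 20\bgamma^2 - 15n\bgamma + 3n^2$, $Q_2 := 45\bgamma^2 - 50n\bgamma + 13n^2$, and $Q_3 := 30\bgamma^2 - 30n\bgamma + 8n^2$. Dividing \Cref{lemma:960-2} and \Cref{lemma:sigma-beta-counts} by $n^3$ gives $x_P \sim 20\sigma_\gamma^2 + Q_1$ and $x_P \sim 80\sigma_\beta^2 + Q_2$, so $x_P - Q_1 \sim 20\sigma_\gamma^2 \geq 0$ and $x_P - Q_2 \sim 80\sigma_\beta^2 \geq 0$. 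Next, starting from \Cref{lemma:beta-gamma-count}, writing $\sum_{\triZero \in P}\beta_{\triZero}\gamma_{\triZero} = \binom{n}{3}\bigl(\Cov(\beta_{\triZero},\gamma_{\triZero}) + \bbeta\,\bgamma\bigr)$ and using $\bbeta \sim \tfrac34(n-\bgamma)$ (which comes from dividing \Cref{lemma:ppe-beta-gamma-n4} by $\binom{n}{3}$), I would obtain $x_P \sim Q_3 - 40\,\Cov(\beta_{\triZero},\gamma_{\triZero})$, i.e.\ $x_P - Q_3 \sim -40\,\Cov(\beta_{\triZero},\gamma_{\triZero})$; this is simply \Cref{eq:beta-gamma-count} with the covariance kept intact rather than bounded by $\pm\sigma_\gamma\sigma_\beta$.

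The heart of the argument is then the covariance inequality $\Cov(\beta_{\triZero},\gamma_{\triZero})^2 \leq \sigma_\gamma^2\sigma_\beta^2$. The coefficients above are arranged so that $1600\,\sigma_\gamma^2\sigma_\beta^2 \sim (x_P - Q_1)(x_P - Q_2)$ while $1600\,\Cov(\beta_{\triZero},\gamma_{\triZero})^2 \sim (x_P - Q_3)^2$, and therefore
\[
  (x_P - Q_1)(x_P - Q_2) \;\gtrsim\; (x_P - Q_3)^2 .
\]
Expanding, the $x_P^2$ terms cancel and this reduces to $(2Q_3 - Q_1 - Q_2)\,x_P \gtrsim Q_3^2 - Q_1 Q_2$. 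A short computation gives the two identities $2Q_3 - Q_1 - Q_2 = 5\bgamma(n - \bgamma)$ and $Q_3^2 - Q_1 Q_2 = 5n(n - \bgamma)\bigl(25\bgamma^2 - 22n\bgamma + 5n^2\bigr)$. Since $\bgamma = 4\,\squareZero(P)/\binom{n}{3}$ and $\squareZero(P) \le \binom{n}{4}$, we have $0 < \bgamma \leq n-3 < n$, so $\bgamma$ and $n - \bgamma$ are both positive; dividing by $5\bgamma(n - \bgamma) > 0$ yields exactly $x_P \gtrsim \dfrac{n\bigl(25\bgamma^2 - 22n\bgamma + 5n^2\bigr)}{\bgamma}$.

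For the final inequality, set $u := \bgamma/n \in (0,1)$, so the right-hand side equals $n^2\bigl(25u - 22 + \tfrac5u\bigr)$; by AM--GM, $25u + \tfrac5u \geq 2\sqrt{125} = 10\sqrt5$ (equality at $u = 1/\sqrt5$), hence it is at least $(10\sqrt5 - 22)n^2$, which would finish the proof.

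The step I expect to need the most care is making the covariance estimate genuinely two-sided: the one-sided bound $\Cov \leq \sigma_\gamma\sigma_\beta$ used in \Cref{eq:beta-gamma-count} only controls $(x_P-Q_3)^2$ by $(x_P-Q_1)(x_P-Q_2)$ when $x_P \lesssim Q_3$, so I also need $\Cov \geq -\sigma_\gamma\sigma_\beta$ (equivalently, the matching upper estimate $x_P \lesssim Q_3 + 40\sigma_\gamma\sigma_\beta$ obtained from \Cref{lemma:beta-gamma-count}) for the squared inequality to hold for \emph{every} $P$. Everything else is bookkeeping: all the $\sim$-manipulations are at leading ($n^5$) order, so lower-order slippage in relations like $\bbeta \sim \tfrac34(n-\bgamma)$ is harmless, and the only genuine computation is the polynomial identity $Q_3^2 - Q_1 Q_2 = 5n(n-\bgamma)(25\bgamma^2 - 22n\bgamma + 5n^2)$, which is what makes the whole combination collapse to the stated bound.
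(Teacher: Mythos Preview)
Your proof is correct and follows essentially the same route as the paper: combine the three asymptotic identities for $x_P$ via $(20\sigma_\gamma^2)(80\sigma_\beta^2)=(40\sigma_\gamma\sigma_\beta)^2$, expand to a linear relation in $x_P$, factor, and minimize in $\bgamma$. Your use of the two-sided covariance inequality $\Cov^2\le\sigma_\gamma^2\sigma_\beta^2$ (rather than squaring the one-sided bound from \Cref{eq:beta-gamma-count}) is in fact a small improvement, since it cleanly handles the case $x_P>Q_3$ that the paper's squaring step leaves implicit.
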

\begin{proof}
   We will use the following three equations:
   \begin{equation}
        20 \sigma^2_\gamma \sim x_P - 20\bgamma^2 + 15n\bgamma - 3n^2. \tag{\Cref{lemma:960-2}}
   \end{equation}
   \begin{equation}
        80 \sigma^2_\beta \sim x_P - 45\bgamma^2 + 50n\bgamma - 13n^2. \tag{\Cref{lemma:sigma-beta-counts}}
   \end{equation}
    \begin{equation}
        40 \sigma_\gamma \sigma_\beta \gtrsim 30 \bgamma^2 - 30n\bgamma + 8n^2 - x_P. \tag{\Cref{eq:beta-gamma-count}}
   \end{equation}
   Noticing that $(20 \sigma^2_\gamma) \cdot (80 \sigma^2_\beta) ={(40 \sigma_\gamma \sigma_\beta)}^2$, we have
   \[
   \begin{multlined}
   \left( x_P - 20\bgamma^2 + 15n\bgamma - 3n^2\right)
     \cdot  \left(x_P - 45\bgamma^2 + 50n\bgamma - 13n^2\right) \\ \gtrsim {(30 \bgamma^2 - 30n\bgamma + 8n^2 - x_P)}^2,
   \end{multlined}
   \]
   which after carrying out the multiplications simplifies to 
   \begin{equation}
    x_P \left(\bgamma n - \bgamma^2 \right) \gtrsim -25\bgamma^3n + 47\bgamma^2n^2 - 27\bgamma n^3 + 5n^4.
   \end{equation}
    We now proceed to factor the right-hand side as follows:
   \begin{align*}
    x_P \left(\bgamma n - \bgamma^2 \right) &\gtrsim n\left(-25\bgamma^3+47\bgamma^2n - 27\bgamma n^2 + 5n^3\right)\\
    &= n\left([-25\bgamma^3 + 25\bgamma^2n] + [22\bgamma^2n - 22\bgamma n^2] + [-5\bgamma n^2 + 5n^3]\right)\\ 
    &= n \left(\bgamma n - \bgamma^2 \right)\left([25\bgamma] + [-22n] + [5n^2/\bgamma]\right).
   \end{align*}
   As $\bgamma < n$, we can safely divide both sides by the positive amount $\left(\bgamma n - \bgamma^2\right)$, obtaining
   \[
    x_P \gtrsim n\left(25\bgamma - 22n + \frac{5n^2}{\bgamma}\right) = \frac{n\left(25 \bgamma^2 -22n\bgamma + 5n^2\right)}{\bgamma}.
   \]
   For the second inequality, consider that
    \begin{align*}
        \frac{n\left(25 \bgamma^2 -22n\bgamma + 5n^2\right)}{\bgamma} &= 25n\bgamma - 22n^2 + \frac{5n^3}{\bgamma}\\
        &= 25n\left(\bgamma + \frac{n^2}{5\bgamma}\right) - 22n^2,
    \end{align*}
    which, given $\bgamma \geq 0$, is clearly minimized when $\bgamma = \frac{n^2}{5\bgamma}$, i.e., $\bgamma = \frac{n}{\sqrt{5}} = \frac{\sqrt{5}}{5}n$. Therefore, we have
    \[
        x_P \gtrsim \frac{n\left(25 \bgamma^2 -22n\bgamma + 5n^2\right)}{\bgamma} \geq 25n \left(\frac{\sqrt{5}}{5}n + \frac{n^2}{\sqrt{5}n}\right) - 22n^2 = (10\sqrt{5}-22)n^2.
    \]
\end{proof}

Our main result is now a direct consequence of~\Cref{lemma:960} as we show next.
\begin{proof}[Proof of~\Cref{thm:main}]
    For any $n$, let $P^\star_n$ be an $n$-point placement that minimizes the number of convex pentagons. In other words, $\pentagon(P^\star_n) = \mu_5(n)$. 
     Using notation $x(P) := 960 \pentagon(P)/n^3$ as in~\Cref{lemma:960}, we have $x(P^\star_n) = 960 \mu_5(n)/n^3$, from where $\mu_5(n) = n^3/960 \cdot x(P^\star_n)$. 
    Therefore, using~\Cref{lemma:960} we have
    \begin{equation}\label{eq:last}
        \mu_5(n) = n^3/960 \cdot x(P^\star_n) \gtrsim n^3/960 \cdot (10\sqrt{5}-22) n^2 = \frac{5\sqrt{5}-11}{480}n^5.
    \end{equation}
    We can therefore conclude as well that
    \begin{align*}
        c_5 &= \lim_{n \to \infty} \mu_5(n)/\binom{n}{5}\\ 
            &=  \lim_{n \to \infty} \frac{\mu_5(n)}{n^5} \cdot \frac{n^5}{\binom{n}{5}}\\
            &= \lim_{n \to \infty} \frac{\mu_5(n)}{n^5} \cdot \lim_{n \to \infty} \frac{n^5}{\binom{n}{5}}\\
            &\geq \frac{5\sqrt{5}-11}{480} \cdot 120 \tag{\Cref{eq:last} and definition of $\gtrsim$}\\
            &= \frac{5\sqrt{5}-11}{4}.
    \end{align*}

\end{proof}

\bibliographystyle{unsrtnat}
\bibliography{references}

\begin{thebibliography}{21}
\providecommand{\natexlab}[1]{#1}
\providecommand{\url}[1]{\texttt{#1}}
\expandafter\ifx\csname urlstyle\endcsname\relax
  \providecommand{\doi}[1]{doi: #1}\else
  \providecommand{\doi}{doi: \begingroup \urlstyle{rm}\Url}\fi

\bibitem[Erd\H{o}s and Guy(1973)]{erdosCrossingNumberProblems1973}
P.~Erd\H{o}s and R.~K. Guy.
\newblock Crossing {{Number Problems}}.
\newblock \emph{The American Mathematical Monthly}, 80\penalty0 (1):\penalty0
  52--58, 1973.
\newblock ISSN 0002-9890.
\newblock \doi{10.2307/2319261}.

\bibitem[Goaoc et~al.(2018)Goaoc, Hubard, de~Joannis~de Verclos, Sereni, and
  Volec]{goaoc2018limitsordertypes}
Xavier Goaoc, Alfredo Hubard, Rémi de~Joannis~de Verclos, Jean-Sébastien
  Sereni, and Jan Volec.
\newblock Limits of order types, 2018.
\newblock URL \url{https://arxiv.org/abs/1811.02236}.

\bibitem[Subercaseaux et~al.(2023)Subercaseaux, Mackey, Heule, and
  Martins]{subercaseaux2023minimizing}
Bernardo Subercaseaux, John Mackey, Marijn J.~H. Heule, and Ruben Martins.
\newblock Minimizing pentagons in the plane through automated reasoning, 2023.

\bibitem[Razborov(2007)]{razborovFlagAlgebras2007}
Alexander~A. Razborov.
\newblock Flag {{Algebras}}.
\newblock \emph{The Journal of Symbolic Logic}, 72\penalty0 (4):\penalty0
  1239--1282, 2007.
\newblock ISSN 00224812.

\bibitem[Graham and Spencer(1990)]{grahamRamseyTheory1990}
Ronald~L. Graham and Joel~H. Spencer.
\newblock Ramsey {{Theory}}.
\newblock \emph{Scientific American}, 263\penalty0 (1):\penalty0 112--117,
  1990.
\newblock ISSN 00368733, 19467087.

\bibitem[Erd{\H{o}}s and Szekeres(1935)]{Erdos1935}
P.~Erd{\H{o}}s and G.~Szekeres.
\newblock A combinatorial problem in geometry.
\newblock \emph{Compositio Mathematica}, 2:\penalty0 463--470, 1935.
\newblock URL \url{http://eudml.org/doc/88611}.

\bibitem[Erd{\H{o}}s and Szekeres(1961)]{Erdos1961}
P.~Erd{\H{o}}s and G.~Szekeres.
\newblock On some extremum problems in elementary geometry.
\newblock \emph{Ann. Univ. Sci. Budapest}, 3--4:\penalty0 53--62, 1961.

\bibitem[Holmsen et~al.(2020)Holmsen, Mojarrad, Pach, and
  Tardos]{holmsenTwoExtensionsErdos2020}
Andreas~F. Holmsen, Hossein~Nassajian Mojarrad, J{\'a}nos Pach, and G{\'a}bor
  Tardos.
\newblock Two extensions of the {{Erd{\H o}s}}--{{Szekeres}} problem.
\newblock \emph{Journal of the European Mathematical Society}, 22\penalty0
  (12):\penalty0 3981--3995, August 2020.
\newblock ISSN 1435-9855.
\newblock \doi{10.4171/jems/1000}.

\bibitem[Suk(2017)]{suk}
{Andrew} Suk.
\newblock On the {E}rd{\H{o}}s-{S}zekeres convex polygon problem.
\newblock \emph{Journal of the American Mathematical Society}, 30\penalty0
  (4):\penalty0 1047--1053, 2017.
\newblock ISSN 08940347, 10886834.

\bibitem[Szekeres and Peters(2006)]{szekeres_peters_2006}
George Szekeres and Lindsay Peters.
\newblock Computer solution to the 17-point {{Erd{\H o}s-Szekeres}} problem.
\newblock \emph{The ANZIAM Journal}, 48\penalty0 (2):\penalty0 151--164, 2006.
\newblock \doi{10.1017/S144618110000300X}.

\bibitem[\'Abrego et~al.(2013)\'Abrego, Fern\'andez-Merchant, and
  Salazar]{AFMS}
Bernardo~M. \'Abrego, Silvia Fern\'andez-Merchant, and Gelasio Salazar.
\newblock The rectilinear crossing number of $k_n$ : Closing in (or are we?).
\newblock In J\'anos Pach, editor, \emph{Thirty Essays on Geometric Graph
  Theory}, pages 5--18, New York, NY, 2013. Springer New York.
\newblock ISBN 978-1-4614-0109-4.
\newblock \doi{10.1007/978-1-4614-0110-0_2}.

\bibitem[Scheinerman and Wilf(1994)]{Scheinerman_Wilf_1994}
Edward~R. Scheinerman and Herbert~S. Wilf.
\newblock The rectilinear crossing number of a complete graph and
  {{Sylvester}}'s ``four point problem'' of geometric probability.
\newblock \emph{The American Mathematical Monthly}, 101\penalty0 (10):\penalty0
  939--943, 1994.
\newblock ISSN 0002-9890.
\newblock \doi{10.2307/2975158}.

\bibitem[{\'A}brego et~al.(2008){\'A}brego, {Fern{\'a}ndez--Merchant},
  Lea{\~n}os, and Salazar]{ABREGO2008273}
Bernardo~M. {\'A}brego, Silvia {Fern{\'a}ndez--Merchant}, Jes{\'u}s Lea{\~n}os,
  and Gelasio Salazar.
\newblock A central approach to bound the number of crossings in a generalized
  configuration.
\newblock \emph{Electronic Notes in Discrete Mathematics}, 30:\penalty0
  273--278, 2008.
\newblock ISSN 1571-0653.
\newblock \doi{10.1016/j.endm.2008.01.047}.

\bibitem[Aichholzer et~al.(2020)Aichholzer, Duque, Fabila-Monroy,
  García-Quintero, and Hidalgo-Toscano]{Ongoing}
Oswin Aichholzer, Frank Duque, Ruy Fabila-Monroy, Oscar~E. García-Quintero,
  and Carlos Hidalgo-Toscano.
\newblock An ongoing project to improve the rectilinear and the pseudolinear
  crossing constants.
\newblock \emph{Journal of Graph Algorithms and Applications}, 24\penalty0
  (3):\penalty0 421–432, 2020.
\newblock ISSN 1526-1719.
\newblock \doi{10.7155/jgaa.00540}.

\bibitem[Battleman et~al.(2024)Battleman, Reeves, and Heule]{Battleman}
Zachary Battleman, Joseph Reeves, and Marijn Heule.
\newblock Effective partitioning of propositional formulas, 2024.
\newblock In preparation, obtained through personal communication.

\bibitem[Razborov(2013)]{RasborovWhatIs}
Alexander~A. Razborov.
\newblock What is a flag algebra?, 2013.

\bibitem[Bondy(1997)]{BONDY199771}
J.A. Bondy.
\newblock Counting subgraphs a new approach to the {{Caccetta-H{\"a}ggkvist}}
  conjecture.
\newblock \emph{Discrete Mathematics}, 165--166:\penalty0 71--80, 1997.
\newblock ISSN 0012-365X.
\newblock \doi{10.1016/S0012-365X(96)00162-8}.

\bibitem[de~Carli~Silva et~al.(2016)de~Carli~Silva, de~Oliveira~Filho, and
  Sato]{silva2016flagalgebrasglance}
Marcel~K. de~Carli~Silva, Fernando~Mário de~Oliveira~Filho, and
  Cristiane~Maria Sato.
\newblock Flag algebras: A first glance, 2016.
\newblock URL \url{https://arxiv.org/abs/1607.04741}.

\bibitem[{OEIS Foundation Inc.}(2024)]{oeis}
{OEIS Foundation Inc.}
\newblock The {O}n-{L}ine {E}ncyclopedia of {I}nteger {S}equences, 2024.
\newblock Published electronically at \url{http://oeis.org}.

\bibitem[Aichholzer et~al.(2001)Aichholzer, Aurenhammer, and
  Krasser]{Aichholzer_Aurenhammer_Krasser_2001}
Oswin Aichholzer, Franz Aurenhammer, and Hannes Krasser.
\newblock Enumerating order types for small sets with applications.
\newblock In \emph{Proceedings of the Seventeenth Annual Symposium on
  {{Computational}} Geometry}, Scg '01, pages 11--18, New York, NY, USA, June
  2001. Association for Computing Machinery.
\newblock ISBN 978-1-58113-357-8.
\newblock \doi{10.1145/378583.378596}.

\bibitem[Subercaseaux et~al.(2024)Subercaseaux, Nawrocki, Gallicchio, Codel,
  Carneiro, and Heule]{subercaseaux2024formalverificationhexagonnumber}
Bernardo Subercaseaux, Wojciech Nawrocki, James Gallicchio, Cayden Codel, Mario
  Carneiro, and Marijn J.~H. Heule.
\newblock Formal verification of the empty hexagon number, 2024.
\newblock URL \url{https://arxiv.org/abs/2403.17370}.

\end{thebibliography}
\end{document}